\numberwithin{equation}{section}
\newtheorem{theorem}{Theorem}
\newtheorem{definition}[theorem]{Definition}
\newtheorem{assumption}[theorem]{Assumption}
\newtheorem{remark}[theorem]{Remark}
\newtheorem{lemma}[theorem]{Lemma}
\newtheorem{example}[theorem]{Example}
\newtheorem{corollary}[theorem]{Corollary}
\newcommand {\R}{} \def\R{\ensuremath{\mathbb{R}}}
\newcommand {\N}{} \def\N{\ensuremath{\mathbb{N}}}
\newcommand {\Z}{} \def\Z{\ensuremath{\mathbb{Z}}}
\def\FF{\mathcal{F}}
\def\BB{\mathcal{B}}
\def\P{\mathbb{P}}
\def\dist{\mathop{\rm dist}}
\definecolor{darkgreen}{rgb}{0,0.6,0}
\newcommand{\be}{\begin{equation}}
\newcommand{\ee}{\end{equation}}
\begin{document}

\title{Random attraction in TASEP with time-varying hopping  rates\thanks{ This research is partially supported by grants from the DFG, project number 470999742, and the ISF.}}

\author{Lars Gr\"une, Kilian Pioch, Thomas Kriecherbauer, and Michael Margaliot\thanks{LG, KP, and TK are  with the
Mathematical Institute, University of Bayreuth, Germany.
MM is with the  School of Electrical \& Computer Engineering and the Sagol School of Neuroscience, Tel Aviv
University, Israel. }}

\maketitle

\onehalfspace 

\begin{abstract} 
 The totally asymmetric simple exclusion principle~(TASEP) is a fundamental model in nonequilibrium statistical mechanics. It describes the stochastic  unidirectional  movement of particles along a 1D chain of ordered sites. 
We consider the continuous-time version of TASEP with a finite number of sites  and  
   with time-varying hopping rates between the sites. We show how to formulate this model as a 
   nonautonomous random dynamical system~(NRDS) with a finite state-space. 
 We provide conditions guaranteeing that 
 random pullback and forward attractors of  such an NRDS exist and consist of singletons. In the context of the nonautonomous TASEP these conditions imply   almost sure synchronization of the individual random paths. 
 This implies in particular that perturbations that change the state of the particles along the chain
are  ``filtered out'' in the long run.
 We demonstrate that the required conditions are tight by providing examples where these conditions do not hold and consequently the forward  attractor does not exist or the pullback attractor is not a singleton. The results in this paper generalize our earlier results for autonomous TASEP in \cite{GrKM21} and contain these as a special case.
\end{abstract}

\maketitle

\smallskip
\noindent \textbf{Keywords.}	
 Nonautonomous random dynamical systems, entrainment, random attractors, synchronization.

\section{Introduction} 

The totally asymmetric simple exclusion principle~(TASEP)
is  a fundamental model in nonequilibrium statistical mechanics~\cite{KrieKrug2010,Chou_2011}. It includes a 1D chain of ordered  sites. Each site can be either empty or occupied by a particle. In the totally asymmetric  model,
the particles  hop stochastically from left to right, but a particle cannot hop into an occupied site (simple exclusion). This generates an indirect coupling between the particles,  and allows to model the evolution of traffic jams  along the chain. Indeed, if a particle is ``stuck'' in a site for a long time then the particles behind it cannot move forward and thus start to accumulate in the preceding sites.  
Derrida et al.~\cite{Derrida_etal_1993} derived a matrix-form    
expression for the steady state under some assumptions (e.g., when  all the internal hopping rates are time-invariant and equal), and used it to analyze phase transitions in TASEP.

 TASEP was originally introduced to model biological processes such as mRNA translation by ribosomes~\cite{MacDonald1968}.
For a survey on using TASEP to model and analyze protein synthesis, see~\cite{TASEP_tutorial_2011}.
However, TASEP and its variants have found numerous applications in diverse fields e.g. modeling vehicular traffic flow and pedestrian dynamics~\cite{ScCN11}, 
the movement of protein motors along filaments in the cell~\cite{TASEP_motors,Muller_2005}, wireless line networks~\cite{TASEP_COMM_NETS},
and more. In all these applications, traffic jams of particles play an important role. For example, 
Leduc et al.~\cite{Leduc_motor_protein_traffic_jams} demonstrate the generation of 
density- and bottleneck-induced traffic jams
 of motor proteins, and it is known that  
 defective intracellular transport can  trigger neuron degeneration~\cite{axonal_transport}.

The random times at which a particle attempts to hop are determined by Poisson processes attached to the sites of the chain,  and the dynamic behavior of TASEP depends on the transition rates of these processes into, out of, and along the chain. 
There are good reasons to study  TASEP with time-varying hopping rates. For example, 
traffic flow is often regulated by   signals  that vary with time (e.g., traffic lights). 
  In a model for ribosome flow along the mRNA molecule time-varying rates are relevant  because
the  eukaryotic cell cycle is controlled by periodic gene expression~\cite{peri_cell_cycle,frenkel12,Higareda2010}.

For TASEP with jointly~$T$-periodic   hopping rates, the results from~\cite{entrain_master} imply that the probability distribution of the solutions converges to a unique $T$-periodic distribution, which is independent of the initial distribution. In other words, the distribution entrains to a periodic solution and distributions with different initial conditions synchronize asymptotically.
The dynamic mean-field of TASEP, called the ribosome flow model~(RFM)~\cite{RFM_stability}, is a deterministic ODE model, and it is known that if the transition rates in this model are jointly $T$-periodic then the densities at the sites converge to a unique~$T$-periodic pattern independent of the initial condition~\cite{RFM_entrain}.

 This immediately raises the question whether the individual paths of the stochastic TASEP model also entrain or synchronize in a suitable sense. In this paper, we give a positive answer to this question, using the framework of nonautonomous random dynamical systems (NRDS) and nonautonomous  random
 attractors. The results and the conditions on the system are formulated for general finite-state NRDS,  and are specialized to TASEP with both general time-varying and with periodic hopping rates.

 The main contributions of this paper include:
\begin{enumerate}
    \item We show how to formulate TASEP with time-varying hopping rates as an NRDS.
    \item We provide conditions under which nonautonomous random pullback and forward attractors exist for general finite-state NRDS.
    \item We provide conditions under which these nonautonomous random attractors consist  of singletons, implying almost sure synchronization of the paths. 
    \item We specialize our results for TASEP with general time-varying and periodic hopping rates. In particular, guaranteeing that attractors   are singletons   implies an important robustness property for the nonautonomous TASEP:   perturbations that change the state of the particles along the chain
    are ``filtered out'' in the long run. 

    \item  We demonstrate  that our conditions are tight by providing   examples  where the
    conditions are violated, and  consequently   forward attraction   fails to hold or the pullback attractor does  not consist of singletons.
\end{enumerate}

Item 5) shows a significant difference to the autonomous case studied in \cite{GrKM21}, while Corollary~\ref{cor:periodic}  below
shows that the nonautonomous periodic case behaves qualitatively similar to the time-invariant case.
In particular, this shows that we obtain the results for autonomous TASEP from \cite{GrKM21} as a special case.

The remainder of this paper is organized as follows. The next section briefly reviews~NRDSs. 
Section~\ref{sec:TASEP}  shows how continuous-time TASEP with a finite number of sites can be formulated  as an NRDS with a finite state space.  
Section~\ref{sec:attractors}  presents results on    nonautonomous random attractors of  a general~NRDS with a  finite state space. The subsequent section applies these results to the  particular case of~TASEP with time-varying rates. Section~\ref{sec:NumSim} describes numerical simulations that demonstrate the theoretical results. The final section concludes.

\section{Nonautonomous random dynamical systems}

In order to   analyze  random attraction in TASEP with time-varying jump rates, we make use of the framework of nonautonomous random dynamical 
systems~(NRDS). The definition of autonomous random dynamical systems~(RDS) goes back at least as far as~\cite{ArnC91}, and a comprehensive treatment can be found in the monograph \cite{Arno98}.  Here we use the  nonautonomous extension from \cite[Definition 1]{CuiK18}. For its definition, we let 
$(X,d_X)$ be a Polish metric space, $(\Omega,\FF,\P)$ a probability space, and $\R^2_{\ge}:=\{(t,s)\in\R^2\,|\, t\ge s\}$. Let~$\BB(X)$ and~$\BB(\R^2_\ge)$   denote the Borel sigma algebra on $X$ and $\R_\ge^2$, respectively. 
\begin{definition}
A mapping $\phi:\R_\ge^2  \times X\times\Omega \to X$ is called a {\em nonautonomous random dynamical 
system}~(NRDS) if 
\begin{enumerate}
\item[(i)] $\phi$ is $(\BB(\R_\ge^2)\times\BB(X)\times\FF,\BB(X))$-measurable;
\item[(ii)] $\phi(t,t,\cdot,\omega)$ is the identity on $X$ for all $t\in\R$ and $\omega\in\Omega$;
\item[(iii)] $\phi$ satisfies the {\em cocycle property}
\[ \phi(t,t_0,x,\omega) = \phi(t,s,\phi(s,t_0,x,\omega),\omega) \; \mbox{ for all } 
 t\ge s\ge t_0,\; x\in X, \;\omega\in\Omega;\]
\item[(iv)] $
x \mapsto \phi(t,s,x,\omega)$ is continuous.
\end{enumerate}
\label{def:nrds}\end{definition}
The interpretation of this definition is as follows: $\phi(\cdot,t_0,x,\omega)$ denotes the solution path corresponding to $\omega$ starting at time $t_0$ in state $x$. 
We note that in contrast to the autonomous RDS definition we have $\omega$ and not $\theta_s\omega$ as an argument of the ``outer'' $\phi$ on the right hand side of the cocycle property (iii)  (for a definition of $\theta_s\omega$ for the autonomous TASEP model we refer to \cite{GrKM21}). 
This is because the time shift that is caused by $\theta_s$ is carried out in the maps $\phi$, which---in contrast to the autonomous case---depends explicitly on the initial time $t_0$. 
As a consequence, we do not need an auxiliary dynamical system~$\theta$ on the probability space.

\section{TASEP with time-varying rates as a nonautonomous random dynamical system}\label{sec:TASEP}

The \emph{totally asymmetric simple exclusion process}~(TASEP)
 is a Markov process for particles hopping or jumping along a 1D chain. We consider the continuous-time version of TASEP here. Moreover, we restrict ourselves to finite lattices with $n\in\N$ sites. Then the Markov process has only a finite number of states. A particle at site $k\in\{1,\ldots,n-1\}$ hops
to site $k+1$ (the next site on the right)
  at a random jump time\footnote{Actually, ``jump attempt times'' would be the more accurate name, but as it is also more clumsy so we prefer the shorter ``jump times''.\label{foot:actual}}  that is exponentially distributed with rate $\lambda_k$, provided that site $k+1$ is not occupied by another particle. 
	This simple exclusion property generates an indirect coupling between the particles and allows, e.g.,
	to model the formation of traffic jams. Indeed, 
	if a particle ``gets stuck'' for a long time in the same site then other particles accumulate behind it. 
At the left end of the chain  particles enter with the entry rate~$\lambda_0>0$, and at the right end particles leave site~$n$ with a rate~$\lambda_{n}>0$. We refer to \cite{Ligg99}, \cite{ScCN11} and the references therein for more information about this model.

Here we consider the variant of TASEP in which the jump rates~$\lambda_k$ assigned to the sites $k\in\{1,\ldots,n-1\}$ as well as the entry rate $\lambda_0$ and exit rate $\lambda_n$ vary with time. To this end, we modify the definition of the~RDS corresponding to the time-invariant TASEP in~\cite{GrKM21} to an~NRDS. This requires  shifting from   homogeneous Poisson processes  to  non-homogeneous ones. 

\subsection{ Homogeneous and non-homogeneous Poisson processes}\label{sec:nonhompoisson}

First observe that for each of the sites as well as for the entry and exit the sequence of random jump times is given by a Poisson process, which for our case of time-varying rates is a non-homogeneous Poisson process.
The initial times $t_0$ of RDS and NRDS are arbitrary real numbers, which in particular includes arbitrary negative real numbers. This is an important feature for the construction of the random attractors later on in the paper. As a consequence, we need a Poisson process on the whole real line $\R$. This is defined in the following two definitions, which follow  \cite[Sections 1.3, 2.1, and 4.5]{Kingman}.

\begin{definition}\label{def:Ppp}
Let $(\Omega,\mathcal F,\P)$ be a probability space, $|\cdot |$ the Lebesgue measure 
on~$\R$, $\BB(\R)$ the set of Borel sets $A\subset\R$, $\lambda>0$, and let~$\R^{\infty}$ denote the set of all countable subsets of~$\R$.
 A homogeneous  Poisson process on~$\R$ with rate~$\lambda$
 is a map~$\Pi : \Omega \to \R^{\infty}$ satisfying the following 
 three conditions:
\begin{itemize}
\item[(i)] The maps $N(A):\Omega \to \N \cup \{ \infty \}$, $\omega \mapsto \#  (\Pi(\omega) \cap A)$ are measurable for all $A \in \BB(\R)$, i.e.~for all $m \in \N \cup \{ \infty \}$ and all $A \in \BB(\R)$ we have that the set $\{ \omega \in \Omega \mid \Pi(\omega) \cap A$ contains exactly~$m$ points$\}$ 
belongs to the sigma algebra $\mathcal F$. 
\item[(ii)] For any pairwise disjoint sets $A_1,\ldots,A_p\in\BB(\R)$, $p\in\N$, the random variables $N(A_1),\ldots,N(A_p)$ are independent.
\item[(iii)] $N(A)$ is ${\rm Pois}(\lambda |A|)$-distributed for all $A\in\BB(\R)$, that is,  $\P(N(A)=k) = \frac{(\lambda|A|)^k}{k!}e^{-\lambda|A|}$ for each integer $k\ge 0$. 
\end{itemize}
\end{definition}

In the following definition we use the notation~$\R_0^+ := \{x\in\R\,|\, x \ge 0\}$.
\begin{definition}\label{def:inhomPpp}
Let $(\Omega,\mathcal F,\P)$ be a probability space.
 A non-homogeneous Poisson process on~$\R$ with  locally integrable non-negative rate  function~$\lambda:\R\to\R_0^+$
 is a map~$\Pi : \Omega \to \R^{\infty}$, such that for the function 
 \[ M(t):=\int_0^t \lambda(s)ds, \]
 the map $M^{\infty}\circ \Pi$ is a homogeneous Poisson process with rate $1$ on~$\R$,  where $M^{\infty}$ means that $M$ is applied to each component of $\Pi(\omega)$.
\end{definition}
As $\lambda(t)=0$ is possible, $M$ may not be invertible on the whole $\R$, i.e., there may be points $y$ for which $M^{-1}(\{y\})$ is an interval. 
However, according to \cite[Section 4.5]{Kingman}, this can only happen for at most countably many $y$, hence the probability of $\Pi$ attaining such a point is zero,   and we may remove the corresponding event from the set $\Omega$. Hence, one can use $M^{-1}$ for passing to a non-homogeneous Poisson process from a homogeneous one.
Note that Definition \ref{def:inhomPpp} implies that $\lim_{t\to\infty} M(t) = \infty$, $\lim_{t\to-\infty} M(t)=-\infty$, because otherwise $ M^{\infty} \circ \Pi$ cannot be a Poisson process on $\R$.

Intuitively, the larger $\lambda(t)$ is in a time interval $I$, the larger $N(I)$ is expected to be in this interval. More precisely, the probability that exactly $k$ events occur on $I=[t_1,t_2]$ is
\[ \P(N(I)=k) = \frac{\Lambda^k}{k!}e^{-\Lambda},  \text{ with } \Lambda = \int_{t_1}^{t_2}\lambda(t)dt.\]

For each $\omega\in\Omega$ the map $\Pi$ then defines a sequence of jump times~$T_k(\omega)$, $k\in\Z$. In fact, as shown in \cite[Definition 5]{GrKM21}, we can explicitly construct a probability space~$(\Omega,\FF,\P)$ for the Poisson process by identifying $\omega$ with the sequence of times $\xi_i:=\hat T_i(\omega)-\hat T_{i-1}(\omega)$, with~$\hat T_i$ collecting all positive jump times for~$i > 0$, all negative times for~$i<0$, and~$\hat T_0:=0$. The construction in \cite[Definition 5]{GrKM21} is made for the homogeneous Poisson process but easily carries over to the non-homogeneous case. By construction, it excludes accumulation points of the sequence~$(T_i(\omega))_{i\in\Z}$ and identical jump times~$T_i(\omega)=T_{i-1}(\omega)$ for each~$\omega\in\Omega$.

For TASEP, we need one non-homogeneous Poisson process for each site, including the entry and exit, whose rates we denote by $\lambda_0,\ldots,\lambda_{n}$, where the indices $0$ and $n$ stand for entry to and exit from the chain, respectively. To this end, we take the $(n+1)$-fold product of the probability space for a single process. Any $\omega$ then corresponds to $n+1$ stochastically independent point processes, which we may represent by strictly increasing sequences $(T_{k,j}(\omega))_{j\in \Z}$ that are unbounded above and below. Here $k=0,\ldots,n$ denotes the lattice site of the random clock where $k=0$ represents the clock for particles entering the first site. Since the exponential distribution is absolutely continuous it is not hard to see that the event that there exist $k\neq k'$, $j$, $j'$ with $T_{k,j}(\omega)=T_{k',j'}(\omega)$ has zero probability,  and we remove this event from our probability space. 

By construction, all jump times $T_{k,j}(\omega)$ are then pairwise distinct for all $\omega \in \Omega$. Therefore there exist unique sequences $k_i=k_i(\omega)$ and $j_i=j_i(\omega)$, $i \in \Z$, with 
\[ T_{k_i,j_i}(\omega) < T_{k_{i+1},j_{i+1}}(\omega)\,, i \in \Z\,, \quad \text{and} \quad T_{k_{-1},j_{-1}} < 0 \;\leq T_{k_{0},j_{0}} \,.\]
We call the random sequence $(k_i)_i$ the {\em jump order sequence} with corresponding {\em jump time sequence} $t_i := T_{k_i,j_i}$.

We remind again that   the jumps at these times indeed take place only if a particle hops to an empty site (see Footnote~\ref{foot:actual}).

\subsection*{Definition of the TASEP  NRDS}

Our goal now is to define an NRDS (that is, a  mapping~$\phi$ that satisfies all the conditions in Definition~\ref{def:nrds})  that corresponds to TASEP with time-varying rates.
 Our definition follows the steps in \cite{GrKM21}, which are suitably adapted to the nonautonomous setting of this paper.

For the convenience of the reader, we summarize this construction in a shortened but self-contained way.

In order to define the state of the system we 
associate to each site $k\in\{1,\ldots,n\}$ a
variable~$s_k$. 
We set $s_{k}=1$ if site $k$ is occupied by a particle,  and $s_k=0$ if it is not. Hence, the (finite) state space of TASEP is $X=\{0,1\}^n$. Since the state space is finite, we use the discrete topology and its Borel sigma algebra, i.e.~all subsets of $X$ are open and measurable.

For formalizing a single particle hop in TASEP, we define the following map $f$. We are given a state $x = (s_1,\ldots,s_n)\in X$ and an index $k\in\{0,\ldots,n\}$ of the site at which the particle attempts to hop, where $k=0$ represents a particle entering the chain. Then we define 
\[ f(x,k) := \left\{ \begin{array}{ll} 
(1,s_2,\ldots,s_n) & \mbox{ if } k=0 \\
(s_1,\ldots,s_{n-1},0) & \mbox{ if } k=n \\
(s_1,\ldots,s_{k-1},0,1,s_{k+2},\ldots, s_{n}) & \mbox{ if } k\ne 0, k\ne n, s_k=1 \mbox{ and } s_{k+1}=0\\
x & \text{ otherwise.} 
\end{array}\right.\]

Note that the first line in the definition of~$f(x,k)$ corresponds to the case where a particle attempts to hop into the first site. Then two cases are possible: if the first site is empty [full] then the  particle indeed hops [does not hop].  Yet, in both cases we end up with the configuration~$(1,s_2,\dots,s_n)$.

Now assume that we have a sequence of jump times $(t_i(\omega))_{i\in\Z}$ with $t_i(\omega)\in\R$ and $t_{i}(\omega) < t_{i+1}(\omega)$ for all $i\in\Z$ together with its jump order sequence $k_i(\omega)\in \{0,\ldots,n\}$. The transition $\tilde\phi:\R_\ge^2\times X\times\R^\Z\times\{0,\ldots,n\}^\Z\to~X$ mapping the initial value $x_0$ at initial time $t_0$ to the state $\tilde\phi(t,t_0,x_0,(t_i(\omega)),(k_i(\omega)))$ at time $t$, given the jump time and order sequences $(t_i(\omega))_{i\in\Z}$ and $(k_i(\omega))_{i\in\Z}$ is then defined by 
\[
\tilde\phi(t,t_0,x_0,(t_i(\omega)),(k_i(\omega))):=x_0
\]
if $t_i(\omega) \not\in [t_0, t)$ for all $i \in \Z$, otherwise inductively via 
\begin{equation} x_{p+1} := f(x_p,k_{p+i_0}) \mbox{ for } p=0,\ldots,\Delta i, \qquad \tilde\phi(t,t_0,x_0,(t_i(\omega)),(k_i(\omega))):= x_{\Delta i+1} \label{eq:tildephi}\end{equation}
where $i_0:=\inf\{ i\in\Z\,|\, t_i(\omega) \ge t_0\}$, $i_1:=\sup\{ i\in\Z\,|\, t_i(\omega) < t\}$ and $\Delta i: =i_1-i_0$. Here we use that $(t_i(\omega))$ has no accumulation points in $\R$.

Finally, we can define the mapping $\phi$ using $\tilde\phi$ of \eqref{eq:tildephi} and the just defined sequences of jump order $(k_i)_i$ and jump times  $(t_i)_i$:
\[ \phi(t,t_0,x,\omega) := \tilde\phi(t,t_0,x,(t_i(\omega)),(k_i(\omega)))\,. \]
Let us check the requirements of Definition \ref{def:nrds}. There is nothing to show for condition (iv), because the state space is discrete. Condition (i) follows from the construction and (ii) holds because $\tilde\phi$ leaves $x$ unchanged for $t=0$ (there is no jump time $t_i(\omega)$ in $[0, t) = \emptyset$). The cocycle property (iii) is an immediate consequence of the inductive definition of $\tilde\phi$.

\section{Random attractors for finite state nonautonomous random dynamical systems}\label{sec:attractors}

As we have seen, TASEP with time-varying rates can be formulated 
 as an NRDS with a finite state space. In this section, we present results for attraction and in particular nonautonomous random attractors of general NRDS with finite state space. While these results are interesting in their own right, in the subsequent section we will in particular use them for TASEP.

\subsection{Nonautonomous random attractors}

Nonautonomous random attractors describe the asymptotic  behavior of systems subject to a stochastic and  time-varying influence. 
The attractors can provide insight into the stability and persistence of certain     behaviors of
the system. We use the following definitions of random attractors in the pullback and in the forward sense. We refer to \cite{ChKS02,Sche02} for a study of the difference between pullback and forward attraction. Here we limit ourselves to the definition of {\em global} random attractors. The first step is to recall  the notion of a  compact nonautonomous random set.

\begin{definition} A {\em nonautonomous random set} $C$ on a probability space $(\Omega,\FF,\P)$ is a family $(C(t))_{t\in\R}$ of time-dependent measurable subsets of $X\times\Omega$ with respect to the product $\sigma$-algebra of the Borel $\sigma$-algebras of $X$ and $\FF$. The~$\omega$-section of a random set~$C$ at time $t\in \R$ is for each $\omega\in\Omega$ defined by
\[ C(t,\omega) = \left\{ x\in X\,|\, (x,\omega)\in C(t)\right\}.\]
The nonautonomous random set is called compact if every $C(t,\omega)$ is compact.
\end{definition}

\begin{definition} Let $\phi$ be an NRDS on $(\Omega,\FF,\P)$. A 
compact nonautonomous random set~$A$, which is strictly $\phi$-invariant, i.e., 
\[ \phi(t,t_0,A(t_0,\omega),\omega) = A( t,\omega) \; \mbox{ for all } (t,t_0)\in\R_\ge^2 \mbox{ a.s.}, \]
is called a {\em global random pullback attractor}, if for each $t\in\R$
\[ \lim_{t_0\to-\infty} \dist\big(\phi(t, t_0, X, \omega), A(t,\omega)\big) = 0 \mbox{ a.s.}. \]
It is called a \emph{global random forward attractor} if for each $t_0\in\R$
\[ \lim_{t\to\infty} \dist\big(\phi(t, t_0, X, \omega), A( t,\omega)\big) = 0 \mbox{ a.s.}. \]
 Here $\dist(A_1,A_2) := \sup_{a_1\in A_1}\inf_{a_2\in A_2}d(a_1,a_2)$. 
\end{definition}

In our case,  the finite state space $X$ is equipped with the discrete topology and we may therefore use the distance defined by $d(x_1,x_2)=1$ if~$x_1\ne x_2$ and~$d(x_1,x_2)=0$ if~$x_1= x_2$. This implies for subsets $A$, $B \subset X$ that $d(A,B)=0$ if $A\subset B$ and $d(A,B)=1$, otherwise. 

For the construction of the attractor, we use that for all $t_1 < t_0 < t$ the cocycle property implies
\begin{equation} \phi(t,t_1,X,\omega) = \phi(t,t_0,\phi(t_0,t_1,X,\omega),\omega) \subset \phi(t,t_0,X,\omega), \label{eq:cocyclsts}\end{equation}
so the set $\phi(t,t_0,X,\omega)$ is decreasing w.r.t.\ set inclusion for decreasing $t_0$. Hence, for each $t\in\R$ we can define its set valued limit via
\begin{equation} A(t,\omega) := \bigcap_{ t_0\le t} \phi(t,t_0,X,\omega). \label{eq:Adef}\end{equation}

\begin{theorem} \label{thm:rattr}
Consider an NRDS with  a  finite state space. Then $A(t,\omega)$ from \eqref{eq:Adef} is nonempty for all $\omega\in\Omega$, $t\in\R$,  and defines a global random pullback attractor. Moreover, for all $\omega\in\Omega$ and $t\in\R$ there exists $T_0(t,\omega)< t$ such that $\phi(t,t_0,X, \omega) = A(t,\omega)$ for all~$t_0\le T_0(t,\omega)$. If, in addition, for each $p\in(0,1)$ there is $T_p>0$ such that 
\begin{equation} \P(\{\omega\in\Omega\,|\, t-T_0(t,\omega)\le T_p\})\ge p \mbox{ holds for all $t\in\R$}, \label{eq:Tp}\end{equation} 
then $A(t,\omega)$ also defines a global random forward attractor and for almost every $\omega\in\Omega$ and all $t_0\in\R$ there exists $T(t_0,\omega)>t_0$ such that 
\[
\phi(t,t_0,X,\omega) = A(t, \omega) \text{ for all } 
t\ge T(t_0,\omega).
\]
\end{theorem}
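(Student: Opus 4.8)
The plan is to exploit throughout that a nested, decreasing (with respect to set inclusion) family of nonempty subsets of the finite set $X$ must stabilize after finitely many steps. Once this is available, the pullback assertions are essentially immediate, and the forward assertions follow by propagating ``collapse onto $A$'' forward in time via the cocycle property.

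For the pullback part I would first fix $t\in\R$ and $\omega\in\Omega$ and note that the sets $\phi(t,t_0,X,\omega)$ are nonempty and, by \eqref{eq:cocyclsts}, nonincreasing as $t_0$ decreases; since they lie in the finite collection of subsets of $X$, their cardinality is eventually constant, so there is $T_0(t,\omega)<t$ with $\phi(t,t_0,X,\omega)=\phi(t,T_0(t,\omega),X,\omega)=A(t,\omega)$ for all $t_0\le T_0(t,\omega)$, whence $A(t,\omega)\neq\emptyset$ (and compact, the topology being discrete). To see that $A$ is a nonautonomous random set I would use the same monotonicity to write $A(t,\omega)=\bigcap_{q\in\Q,\,q\le t}\phi(t,q,X,\omega)$ as a \emph{countable} intersection and observe that each set $\{\omega\mid x\in\phi(t,q,X,\omega)\}=\bigcup_{x_0\in X}\{\omega\mid\phi(t,q,x_0,\omega)=x\}$ is measurable by condition~(i) of Definition~\ref{def:nrds}. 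Strict $\phi$-invariance then holds for every $\omega$: for $t\ge t_0$ and any $s\le\min\{T_0(t_0,\omega),T_0(t,\omega)\}$ the cocycle property gives $\phi(t,t_0,A(t_0,\omega),\omega)=\phi\big(t,t_0,\phi(t_0,s,X,\omega),\omega\big)=\phi(t,s,X,\omega)=A(t,\omega)$. Finally, since we use the discrete metric, $\dist(\phi(t,t_0,X,\omega),A(t,\omega))=0$ whenever $\phi(t,t_0,X,\omega)=A(t,\omega)$, which holds for all $t_0\le T_0(t,\omega)$; hence the pullback limit vanishes and $A$ is a global random pullback attractor.

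For the forward part under \eqref{eq:Tp} the step I would establish first is that collapse onto $A$ is preserved forward in time: if $\phi(t^*,t_0,X,\omega)=A(t^*,\omega)$ for some $t^*\ge t_0$, then for every $t\ge t^*$ the cocycle property together with invariance gives $\phi(t,t_0,X,\omega)=\phi\big(t,t^*,\phi(t^*,t_0,X,\omega),\omega\big)=\phi\big(t,t^*,A(t^*,\omega),\omega\big)=A(t,\omega)$. Consequently, for fixed $t_0$ the events $E_{t_0}(t):=\{\omega\mid\phi(t,t_0,X,\omega)=A(t,\omega)\}$ are nondecreasing in $t$, so $\bigcup_{t\ge t_0}E_{t_0}(t)$ equals the countable union over $t\in\{t_0+m\mid m\in\N\}$ and is measurable. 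Next, given $p\in(0,1)$ I would take $T_p$ as in \eqref{eq:Tp}: for $t\ge t_0+T_p$, the condition $t-T_0(t,\omega)\le T_p$ forces $T_0(t,\omega)\ge t-T_p\ge t_0$ and hence $\phi(t,t_0,X,\omega)=A(t,\omega)$, so $\{\omega\mid t-T_0(t,\omega)\le T_p\}\subset E_{t_0}(t)$ and $\P(E_{t_0}(t))\ge p$. Letting $p\uparrow1$ then yields $\P\big(\bigcup_{t\ge t_0}E_{t_0}(t)\big)=1$, so almost surely there is $t^*>t_0$ with $\phi(t^*,t_0,X,\omega)=A(t^*,\omega)$, and by the preservation property $T(t_0,\omega):=t^*$ satisfies $\phi(t,t_0,X,\omega)=A(t,\omega)$ for all $t\ge T(t_0,\omega)$; in particular $\dist(\phi(t,t_0,X,\omega),A(t,\omega))\to0$ as $t\to\infty$, which is the forward attractor property. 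To upgrade this to ``almost every $\omega$ and all $t_0\in\R$'', I would observe that if the conclusion holds for some $t_0$ then it holds for every $t_0'\le t_0$, since for $t\ge T(t_0,\omega)$ the inclusions $A(t,\omega)\subset\phi(t,t_0',X,\omega)\subset\phi(t,t_0,X,\omega)=A(t,\omega)$ (from \eqref{eq:Adef} and \eqref{eq:cocyclsts}) force equality; hence intersecting the full-measure events over $t_0\in\N$ gives one full-measure event on which the conclusion holds for every real $t_0$.

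The step I expect to be the main obstacle is converting the pointwise-in-$t$ stochastic tightness \eqref{eq:Tp} into \emph{almost sure} eventual collapse. A Borel--Cantelli argument is not available here because the events $E_{t_0}(t)$ at different times are strongly dependent; the resolution is instead that the forward-preservation property makes $\{E_{t_0}(t)\}_{t\ge t_0}$ an increasing family of events, whose union therefore has probability $\sup_{t\ge t_0}\P(E_{t_0}(t))\ge\sup_{p<1}p=1$. Everything else amounts to routine manipulation of the cocycle property, the discrete metric, and the monotonicity in $t_0$.
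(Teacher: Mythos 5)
Your proof is correct and follows essentially the same route as the paper's: the finite-state monotonicity argument yields finite-time pullback stabilization at some $T_0(t,\omega)$, invariance follows by passing to a time below both stabilization thresholds, and the forward part converts \eqref{eq:Tp} into events of probability at least $p$ on which collapse onto $A$ persists for all later times via the cocycle property, then lets $p\uparrow 1$. The only additions are your explicit treatment of the measurability of $A$ (via a countable intersection over rational initial times) and of the uniformity over all $t_0$, both of which the paper leaves implicit.
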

\begin{proof}
For $t\ge t_0$ and $\omega\in\Omega$ define the set $B(t,t_0,\omega): = \phi(t,t_0,X,\omega)$. From \eqref{eq:cocyclsts} we obtain that for all $t_1<t_0<t$, $x\in X$ and $\omega\in\Omega$
\begin{equation} B(t,t_1,\omega) =  \phi(t,t_0,B(t_0,t_1,\omega),\omega) \subset B(t,t_0,\omega). \label{eq:Bts}\end{equation}
Since these sets are finite, this inclusion implies  for all $t \in \R$ and $\omega\in\Omega$ that the map~$t_0\mapsto 
B(t,t_0,\omega)
$ can change its value only finitely many times, implying that $A(t,\omega) = \bigcap_{ t_0\le t} B(t,t_0,\omega)$ equals $B(t,t_0,\omega)$ for all sufficiently small $ t_0\le t$ and is thus in particular nonempty. 
 Moreover, there exists $T_0(t,\omega)<t$ such that
$A(t,\omega) = B(t,t_0,\omega)$ holds for all $t_0\le T_0(t,\omega)$. Then
\[\dist(\phi(t,t_0,X,\omega), A(t,\omega)) = \dist(B(t,t_0,\omega), A(t,\omega)) = 0 \]
for all $t_0\le T_0(t, \omega)$, i.e., finite time pullback attraction.

Next we prove $\phi$-invariance of $A$, i.e., $\phi(t,t_0,A(t_0,\omega),\omega) = A(t,\omega)$ for all $t\ge t_0$ and all $\omega\in\Omega$. To this end, fix $t\ge t_0$ and $\omega\in\Omega$ and choose $t_1\le \min\{ T_0(t,\omega), T_0(t_0,\omega)
\}$. Then we have $A(t,\omega)=B(t,t_1,\omega)$ and $A(t_0,\omega) = B(t_0,t_1,\omega)$. Using the first identity in \eqref{eq:Bts} this yields
\[ \phi(t,t_0,A(t_0,\omega),\omega) = \phi(t,t_0,B(t_0,t_1,\omega),\omega) = B(t,t_1,\omega)=A(t, \omega).\]
Together, this shows that $A$ is a global random pullback attractor and that pullback attraction happens in finite time $ t-T_0(t,\omega)$ for each $\omega\in\Omega$ and $t\in\R$.

In order to see that $A$ is also a global forward attractor under the additional condition~\eqref{eq:Tp}, fix $t_0\in\R$, $T>0$,  and consider the set 
\[ \Omega_{t_0,T} :=  \{ \omega\in\Omega \,|\, t_0 \le T_0(t_0+T,\omega)\}. \]
The definition of $T_0$ implies  for every $\omega\in \Omega_{t_0,T}$ that
\[ \phi(t_0+T,t_0,X,\omega) = A(t_0+T,\omega). \]
Using the cocycle property and the invariance of $A$ yields
\begin{equation} \phi(t,t_0,X,\omega) = A(t,\omega) \label{eq:AS}
\end{equation}
for all $t\ge t_0+T$ 
and all $\omega\in \Omega_{t_0,T}$. From \eqref{eq:Tp} it moreover follows that $\P(\Omega_{t_0,T_p})\ge p$ holds, implying that \eqref{eq:AS} holds with probability $\ge p$ for all  $t\ge t_0+T_p$.
The identity~\eqref{eq:AS} implies 
\[ \dist(\phi(t,t_0,X,\omega), A(t,\omega)) = 0 \]
for all $t\ge t_0+T_p$, and thus forward attraction in finite time $T_p$ with probability larger than $p$. Since $p\in(0,1)$ is arbitrary, this implies forward attraction to $A$ with arbitrarily large probability and thus almost sure forward attraction in finite time.
\end{proof}

\begin{remark} (i)
In contrast to the autonomous case studied in \cite{GrKM21}, here the finite state property does not imply that the pullback attractor defined 
in~\eqref{eq:Adef} is also
a forward attractor.  We display such a case in Example~\ref{weirdTASEP} in the context of the non-autonomous TASEP model.
Condition \eqref{eq:Tp} is a uniformity condition that safeguards against such a situation. In the next section we will derive sufficient conditions on the rates of TASEP implying \eqref{eq:Tp}.

(ii) Even under the uniformity condition \eqref{eq:Tp} there is an asymmetry between the statements for forward and pullback attraction in Theorem \ref{thm:rattr}: while pullback attraction holds for all $\omega\in\Omega$, forward attraction holds only for almost all $\omega\in\Omega$. As \cite[Remark 14(ii)]{GrKM21} shows, this is not a shortcoming of our proof but can happen even in the autonomous case.
\label{rem:rw3}
\end{remark}

\subsection{Random attractors consisting of single trajectories}

We now investigate conditions under which the random attractor consists of a single trajectory almost surely. In this case, the 
long time behavior of the solutions of the~NRDS is almost surely independent of the initial condition. The following theorem gives necessary and sufficient conditions for this property to hold.
For any~$t_0\leq t$, define the sets 
\begin{equation} \Gamma(t,t_0):= \{\omega\in\Omega\,|\,\phi(t,t_0,x_1,\omega)=\phi(t,t_0,x_2,\omega) \mbox{ for all } x_1,x_2\in X\}. 
\label{eq:gammadef}\end{equation}
Intuitively speaking, this is the set of random realizations for which  all solutions starting at time $t_0$  synchronize at time $t$,  regardless of their initial condition.
We first prove the following lemma.

\begin{lemma}
    For all $\hat t \ge t \ge t_0 \ge \hat t_0$ it holds that $\Gamma(t,t_0)\subset \Gamma(\hat t,\hat t_0)$.
\label{lemma:Gammamonotone}\end{lemma}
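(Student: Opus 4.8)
The plan is to exploit the cocycle property together with the fact that $\phi(t,s,\cdot,\omega)$ is a map from $X$ into $X$ that can only collapse points, never separate them. The key observation is the following monotonicity: if at some intermediate time all solutions have already coincided, then they stay coincided thereafter, and symmetrically, if they coincide at time $t$ starting from $t_0$, then they also coincide at time $t$ starting from any earlier time $\hat t_0 \le t_0$, because the trajectory from $\hat t_0$ passes through the image $\phi(t_0,\hat t_0,X,\omega) \subseteq X$ at time $t_0$, which is then mapped forward by $\phi(t,t_0,\cdot,\omega)$ exactly as the full state space $X$ would be.

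The proof splits naturally into two elementary inclusions which are then chained. First I would show $\Gamma(t,t_0)\subset\Gamma(\hat t,t_0)$ for $\hat t\ge t$: if $\omega\in\Gamma(t,t_0)$, then $\phi(t,t_0,x_1,\omega)=\phi(t,t_0,x_2,\omega)=:y$ for all $x_1,x_2\in X$, and applying the cocycle property $\phi(\hat t,t_0,x_i,\omega)=\phi(\hat t,t,\phi(t,t_0,x_i,\omega),\omega)=\phi(\hat t,t,y,\omega)$ shows the common value persists, so $\omega\in\Gamma(\hat t,t_0)$. Second I would show $\Gamma(t,t_0)\subset\Gamma(t,\hat t_0)$ for $\hat t_0\le t_0$: if $\omega\in\Gamma(t,t_0)$, then for arbitrary $x_1,x_2\in X$ set $z_i:=\phi(t_0,\hat t_0,x_i,\omega)\in X$; by the cocycle property $\phi(t,\hat t_0,x_i,\omega)=\phi(t,t_0,z_i,\omega)$, and since $\omega\in\Gamma(t,t_0)$ the right-hand side is the same for $z_1$ and $z_2$ (indeed for all elements of $X$), so $\phi(t,\hat t_0,x_1,\omega)=\phi(t,\hat t_0,x_2,\omega)$, giving $\omega\in\Gamma(t,\hat t_0)$. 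Combining, $\Gamma(t,t_0)\subset\Gamma(\hat t,t_0)\subset\Gamma(\hat t,\hat t_0)$, which is the claim.

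I do not anticipate a genuine obstacle here; the statement is essentially a bookkeeping consequence of the cocycle property and the discreteness (finiteness) of $X$, which guarantees all the intermediate points $z_i$ lie in $X$ so that the hypothesis ``for all $x_1,x_2\in X$'' can be invoked. The only mild subtlety worth stating carefully is that in the second inclusion one must use the full strength of membership in $\Gamma(t,t_0)$ — namely that $\phi(t,t_0,\cdot,\omega)$ is constant on all of $X$, not merely on the particular pair $z_1,z_2$ — but this is immediate from the definition in~\eqref{eq:gammadef}. No measurability considerations arise, since the lemma is a pointwise ($\omega$-by-$\omega$) set inclusion.
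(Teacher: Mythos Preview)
Your proof is correct and follows essentially the same route as the paper: two applications of the cocycle property, one to extend the time interval forward and one backward. The paper does the backward extension first (showing $\omega\in\Gamma(t,\hat t_0)$) and then the forward extension, whereas you do forward then backward, but this is purely cosmetic. One minor remark: your comment that finiteness of $X$ is needed so that the intermediate points $z_i$ lie in $X$ is unnecessary---$\phi$ maps into $X$ by definition, so $z_i\in X$ holds regardless of whether $X$ is finite.
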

\begin{proof}
    Pick $\omega\in\Gamma(t,t_0)$ and $x_1,x_2\in X$. Define $\tilde x_i = \phi(t_0,\hat t_0,x_i,\omega)$ for $i=1,2$. Then the definition of $\Gamma$ yields
    \[ \phi(t,t_0,\tilde x_1,\omega) =  \phi(t,t_0,\tilde x_2,\omega). \]
    By the cocycle property we obtain
    \begin{align*}
        \phi(t,\hat t_0,x_1,\omega) = \phi(t, t_0,\tilde x_1,\omega) = \phi(t, t_0,\tilde x_2,\omega) = \phi(t,\hat t_0,x_2,\omega)
    \end{align*}
    and 
    \begin{align*}
        \phi(\hat t,\hat t_0,x_1,\omega) = \phi(\hat t, t ,\phi(t,\hat t_0, x_1,\omega),\omega) = \phi(\hat t, t ,\phi(t,\hat t_0, x_2,\omega),\omega) = \phi(\hat t,\hat t_0,x_2,\omega).
    \end{align*}
    This shows that $\omega \in \Gamma(\hat t,\hat t_0)$ and completes the proof.
\end{proof}

\begin{theorem} Consider an NRDS with a finite state space. Then statements (2)-(4) below are all equivalent to each other and imply statement (1).
\begin{enumerate} 
\item[(1)] There exists $p>0$ such that for any $t\in\R$ there exists $t_0<t$ with 
\[ \P(\Gamma(t,t_0)) > p. \]
\item[(2)] For any $t\in\R$ it holds that
\[ \lim_{t_0\to-\infty}\P(\Gamma(t,t_0)) = 1. \]
\item[(3)] For any $t\in\R$ 
\[ \P(\{\omega\in\Omega\,|\, \phi(t,t_0,X,\omega) \mbox{ is a singleton for some } t_0 < t\}) = 1.\]
\item[(4)]  For all $t\in\R$ the sets $A(t,\omega)$ from \eqref{eq:Adef} are singletons for almost all $\omega\in\Omega$.
\end{enumerate}
If in addition the solutions $\phi(t,t_0,x,\cdot)$ are stochastically independent on non-overlapping intervals, i.e.,~$\phi(t,t_0,x,\cdot)$ is independent of $\phi(s,s_0,x,\cdot)$ if $[t_0,t) \cap [s_0,s) = \emptyset$, then statement~(1) is equivalent to statements~(2)-(4), If, moreover, the $t_0$ in (1) can be chosen such that the difference $t-t_0$ in (1) is bounded uniformly in $t$, then the convergence in statement~(2) has an exponential rate, and the singletons $A(t,\omega)$ also provide a global random forward attractor. 
\label{thm:singleton}
\end{theorem}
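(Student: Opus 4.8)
The plan is to separate the inexpensive set-theoretic equivalences from the one genuinely probabilistic step. First I would note that $\phi(t,t_0,X,\omega)$ is a singleton exactly when $\omega\in\Gamma(t,t_0)$, so that the event in (3) is $\bigcup_{t_0<t}\Gamma(t,t_0)$. By Lemma~\ref{lemma:Gammamonotone} (applied with $\hat t=t$) this is an increasing union as $t_0$ decreases, so continuity of $\P$ from below gives $\P(\bigcup_{t_0<t}\Gamma(t,t_0))=\lim_{t_0\to-\infty}\P(\Gamma(t,t_0))$; hence (2)$\Leftrightarrow$(3). For (3)$\Leftrightarrow$(4), Theorem~\ref{thm:rattr} supplies $A(t,\omega)=\phi(t,t_0,X,\omega)$ for all sufficiently small $t_0$ together with $\emptyset\neq A(t,\omega)\subset\phi(t,t_0,X,\omega)$ for every $t_0$; therefore ``$A(t,\omega)$ is a singleton'' and ``$\phi(t,t_0,X,\omega)$ is a singleton for some $t_0<t$'' describe the very same $\omega$-set. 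Finally (2)$\Rightarrow$(1) is immediate (take $p=\tfrac12$). This disposes of every claim except the converse implication and the two refinements.

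\textbf{From (1) to (2) via compounding.} Fix $t\in\R$. Iterating (1), I would build a strictly decreasing sequence $\tau_0:=t>\tau_1>\tau_2>\cdots$ with $\P(\Gamma(\tau_k,\tau_{k+1}))>p$ for every $k$, each $\tau_{k+1}$ being a time furnished by (1) at the instant $\tau_k$. Lemma~\ref{lemma:Gammamonotone} gives $\Gamma(\tau_k,\tau_{k+1})\subset\Gamma(t,\tau_m)$ for all $0\le k\le m-1$, so $\Gamma(t,\tau_m)\supset\bigcup_{k=0}^{m-1}\Gamma(\tau_k,\tau_{k+1})$. The windows $[\tau_{k+1},\tau_k)$, $k=0,\dots,m-1$, are pairwise disjoint, and each $\Gamma(\tau_k,\tau_{k+1})$ is determined by the solution maps restricted to $[\tau_{k+1},\tau_k)$; hence by the independence hypothesis the events $\Gamma(\tau_0,\tau_1),\dots,\Gamma(\tau_{m-1},\tau_m)$ are mutually independent, whence $\P(\Gamma(t,\tau_m))\ge 1-\prod_{k=0}^{m-1}\bigl(1-\P(\Gamma(\tau_k,\tau_{k+1}))\bigr)>1-(1-p)^m$. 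Since $t_0\mapsto\P(\Gamma(t,t_0))$ is nondecreasing as $t_0$ decreases (Lemma~\ref{lemma:Gammamonotone}) and each $\tau_m$ is finite, $\P(\Gamma(t,\tau_m))\le\lim_{t_0\to-\infty}\P(\Gamma(t,t_0))$ for every $m$; letting $m\to\infty$ forces that limit to equal $1$, i.e.\ (2). Note this argument does not require $\tau_m\to-\infty$.

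\textbf{The uniform-bound refinements.} Suppose now that in (1) one can always choose $t-t_0\le C$. Then the construction above yields $\tau_m\ge t-mC$, so for any $s\le t$, with $m:=\lfloor(t-s)/C\rfloor$ we get $\tau_m\ge s$, hence $\Gamma(t,\tau_m)\subset\Gamma(t,s)$ and $1-\P(\Gamma(t,s))\le(1-p)^m\le(1-p)^{(t-s)/C-1}$: this is the asserted exponential rate, and it is uniform in $t$. For the forward attractor I would verify hypothesis \eqref{eq:Tp} of Theorem~\ref{thm:rattr}. If $\omega\in\Gamma(t,s)$, then $\phi(t,s,X,\omega)$ is a nonempty singleton containing $A(t,\omega)$, so it equals $A(t,\omega)$, and then $\phi(t,s',X,\omega)=A(t,\omega)$ for all $s'\le s$ as well; hence $T_0(t,\omega)$ may be taken $\ge s$, giving $t-T_0(t,\omega)\le t-s$. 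Given $q\in(0,1)$, pick $T_q$ so large that $(1-p)^{T_q/C-1}\le 1-q$; the previous estimate then gives $\P(\{\omega:\, t-T_0(t,\omega)\le T_q\})\ge\P(\Gamma(t,t-T_q))\ge q$ for \emph{every} $t\in\R$, which is precisely \eqref{eq:Tp} (with its parameter renamed $q$). By Theorem~\ref{thm:rattr} the (now singleton) family $A(t,\omega)$ is therefore also a global random forward attractor.

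\textbf{Main obstacle.} The only real work is the implication (1)$\Rightarrow$(2): one must upgrade a single, unquantified ``positive probability of synchronization over some interval'' into probabilities converging to $1$, and independence on disjoint time windows is the sole mechanism available. Two points need care. First, the independence hypothesis is phrased for a fixed initial state $x$, whereas $\Gamma(\tau_k,\tau_{k+1})$ depends on the whole finite family $(\phi(\tau_k,\tau_{k+1},x,\cdot))_{x\in X}$; one should read the hypothesis at the level of these $X$-valued random vectors (for the TASEP realization this is automatic, since disjoint time windows involve disjoint---hence independent---collections of Poisson clock events). Second, the times $\tau_m$ produced by iterating (1) need not tend to $-\infty$, so the equality $\lim_{t_0\to-\infty}\P(\Gamma(t,t_0))=1$ must be read off from the monotonicity of $t_0\mapsto\P(\Gamma(t,t_0))$ rather than from convergence of the $\tau_m$. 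Everything else is routine bookkeeping with Lemma~\ref{lemma:Gammamonotone} and Theorem~\ref{thm:rattr}.
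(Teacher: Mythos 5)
Your proposal is correct and follows essentially the same route as the paper: the same set-identities for (2)$\Leftrightarrow$(3)$\Leftrightarrow$(4) via Lemma~\ref{lemma:Gammamonotone} and Theorem~\ref{thm:rattr}, and the same compounding of independent synchronization windows for (1)$\Rightarrow$(2) (your union bound $1-\prod(1-\P(\Gamma(\tau_k,\tau_{k+1})))$ is just the complement of the paper's intersection estimate). The only cosmetic difference is that the paper forces $t_{m+1}\le t_m-1$ so that $\tau_m\to-\infty$, whereas you instead invoke monotonicity of $t_0\mapsto\P(\Gamma(t,t_0))$ — both are fine — and your explicit verification of \eqref{eq:Tp} matches the paper's concluding choice of $T_q$.
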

\begin{proof}
We first show for a general finite state NRDS the implication (2) $\Rightarrow$ (1) as well as the equivalences (2) $\Leftrightarrow$ (3), and (3) $\Leftrightarrow$ (4). The proof is then completed by demonstrating the implication (1) $\Rightarrow$ (2)  under the additional assumption of stochastic independence of the RDS with an exponential rate of convergence under the stated uniformity condition on $t-t_0$.

(2) $\Rightarrow$ (1): This is obvious for any $p\in(0,1)$.

(2) $\Leftrightarrow$ (3): Since $\phi(t,t_0,X,\omega)$ is a singleton if and only if $\phi(t,t_0,x_1,\omega)=\phi(t,t_0,x_2,\omega)$  holds for all $x_1,x_2\in X$, we have 
\begin{equation}\label{eq:2to3}  
\bigcup_{t_0 < t} \, \Gamma (t, t_0) \; = \; \{\omega\in\Omega\,|\, \phi(t,t_0,X,\omega) \mbox{ is a singleton for some } t_0 <t\} \,.
\end{equation}
By the monotonicity of the sets $\Gamma(t,t_0)$ proved in Lemma \ref{lemma:Gammamonotone} it follows that 
\begin{equation*}
\P\left(\bigcup_{t_0 < t} \, \Gamma (t, t_0)\right) = \lim_{t_0\to-\infty}\P(\Gamma(t,t_0))
\end{equation*} 
 for any $t\in\R$
 and thus
\[ \lim_{t_0\to-\infty}\P(\Gamma(t,t_0))  \; = \; \P(\{\omega\in\Omega\,|\, \phi(t,t_0,X,\omega) \mbox{ is a singleton for some } t_0 <t\}),\]
which implies the equivalence between (3) and (2).

(3) $\Leftrightarrow$ (4): The definition of the set $A(t,\omega)$ in~\eqref{eq:Adef} together the finiteness of the state space gives (cf.~the proof of~Theorem \ref{thm:rattr}) 
\begin{equation}\label{eq:4to5}
 \bigcup_{t_0 < t} \, \Gamma (t,t_0)  \; = \;  \{\omega\in\Omega\,|\,A(t,\omega) \mbox{ is a singleton}\}\,.
\end{equation}
Hence, the sets on the right-hand-sides of relations \eqref{eq:2to3} and \eqref{eq:4to5} have the same probability. So far we showed that~$(4) \Leftrightarrow(3) \Leftrightarrow(2) \Rightarrow(1)$.

From now on we also assume the stochastic independence of the NRDS on non-overlapping time intervals.

(1) $\Rightarrow$ (2): From Lemma \ref{lemma:Gammamonotone} it follows  for all $t_1<t_2<t_3$ that 
\begin{equation} \Gamma(t_3,t_1) \supset \Gamma(t_3,t_2) \cup \Gamma(t_2,t_1). \label{eq:gammasupset}\end{equation}
Fixing $t\in\R$ for which we want to prove (2), because of the monotonicity of $t_0\mapsto \P(\Gamma(t,t_0))$ it suffices to prove the convergence in (2) for a suitable sequence $t_m\to-\infty$. 

To this end, observe that for the complements $A^C := \Omega \setminus A$, relation \eqref{eq:gammasupset} implies 
\begin{equation} \Gamma(t_3,t_1)^C \subset \Gamma(t_3,t_2)^C \cap \Gamma(t_2,t_1)^C \label{eq:gammasubset}\end{equation}
for all~$t_1<t_2<t_3$. Now we define a sequence $(t_m)_{m\ge 0}$ with $t_m \to -\infty$ as follows: We set~$t_0:=t$. Then, inductively for $m=0,1,2,\ldots$, given $t_m$ we pick $t_{m+1}<t_m$ such that (1) is satisfied for $t=t_m$ and $t_0=t_{m+1}$. By monotonicity of $t_{m+1}\mapsto \P(\Gamma(t_m,t_{m+1}))$ we can choose $t_{m+1}\le t_m-1$, thus ensuring $t_m\to -\infty$ as $m\to\infty$.
This choice of the $t_m$ implies
\[ \P(\Gamma(t_m,t_{m+1})^C) \le 1-p <1 \]
for all $m\ge 0$ with $p>0$ from (1). By the  additional
assumption on the NRDS, the 
sets~$\Gamma(t_m,t_{m+1})$ and thus the sets~$\Gamma(t_m,t_{m+1})^C$ are stochastically independent for different~$m$. Thus, using \eqref{eq:gammasubset} we obtain
\[ \P(\Gamma(t,t_{m})^C) \le \P\left(\bigcap_{\ell=1}^m\Gamma(t_{\ell-1},t_\ell)^C\right) = \prod_{\ell=1}^m \P(\Gamma(t_{\ell-1},t_\ell)^C) \le (1-p)^m \to 0\]
as $m\to\infty$. This implies $\P(\Gamma(t,t_{m}))\to 1$, completing the proof that (1) $\Rightarrow$ (2). 

Now  assume   in addition that the difference~$t-t_0$ in~(1) is bounded uniformly in $t$, say by $\Delta t$. For any $\tau < \sigma$ choose $m$ to be the maximal integer satisfying $\sigma-\tau \ge m \Delta t$. Since $(m+1)\Delta t > \sigma-\tau$ the above argument yields
\[
0 \le 1- \P(\Gamma(\sigma ,\tau)) \le (1-p)^m < C e^{- \gamma (\sigma-\tau)}
\]
with $C:=(1-p)^{-1}$ and $\gamma:=-(\ln(1-p))/\Delta t > 0$. From this estimate we conclude both, the exponential convergence of $\P(\Gamma(t ,t_0))$ as $t_0 \to - \infty$, and that the uniformity condition~\eqref{eq:Tp} holds, choose e.g. $T_q = \gamma^{-1}\ln(C/(1-q))$ for $0<q<1$.
\end{proof}

\section{Random attraction in the nonautonomous TASEP model}\label{sec:TASEPattractors}
We now apply the results of the previous section to the nonautonomous TASEP. To this end, first note that the pullback statements in Theorem~\ref{thm:rattr} and all equivalence statements in Theorem~\ref{thm:singleton} apply to the TASEP NRDS, since the state space is finite and the paths of the 
non-homogeneous
Poisson process are independent on non-overlapping intervals, implying the same for the solutions $\phi$. Hence, the sets~$A(t,\omega)$ from \eqref{eq:Adef} define a global pullback random attractor.

What remains to be  investigated is whether 
\begin{itemize}
\item[(i)] the sets $A(t,\omega)$ from \eqref{eq:Adef} also define a global forward attractor. To this end, we give sufficient conditions on the nonautonomous rates of TASEP such that \eqref{eq:Tp} holds;
\item[(ii)] the sets $A(t,\omega)$ from \eqref{eq:Adef} are singletons; to this end we will give sufficient conditions under which we can verify property~(1) in Theorem~\ref{thm:singleton}, possibly with uniformly bounded $t-t_0$.
\end{itemize}

For addressing both points we rely on the jump order sequences  
$(k_i)_i$ that were introduced in Section \ref{sec:nonhompoisson}. The following assumption on the uniformity of the rates is crucial for this.

\begin{assumption}
    For $R>r>0$ and $m\in\N$, we say that the $n+1$ Poisson processes governing TASEP have  joint $(r,R,m)$-bounded rates on an interval $I=[\tau_1,\tau_2)$, if there exists a division of $I$ into $m$ subintervals $I_i=[q_{i-1},q_{i})\subset I$, $\tau_1=q_0<q_1<\ldots<q_m = \tau_2$, such that 
    \[r \le \int_{q_i}^{q_{i+1}} \lambda_k(t) dt \le R \]
    holds for all $i=0,\ldots,m-1$ and for all $k=0,\ldots,n$. 
\label{asm:boundedrates}    
\end{assumption}
\begin{lemma}
    Consider a bounded interval $I=[\tau_1,\tau_2)$ on which the Poisson processes governing TASEP have joint $(r,R,m)$-bounded rates. Then there exists a value $p(r,R,m,n)>0$ such that for any prescribed tuple of sites $(k_1, \ldots, k_m)$ the probability that this jump sequence occurs in the interval $I=[\tau_1,\tau_2)$ is larger than $p(r,R,m,n)$. 
\label{lemma:jumpsequence}    
\end{lemma}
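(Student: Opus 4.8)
The plan is to estimate the probability of a prescribed jump sequence $(k_1,\ldots,k_m)$ directly, using the subinterval structure guaranteed by the $(r,R,m)$-bounded rates assumption. First I would reduce the problem to a single subinterval: on $I_i = [q_{i-1},q_i)$ I want exactly one jump of the combined process, namely one from the Poisson clock at site $k_i$, and no jumps from the other $n$ clocks. Since the $n+1$ Poisson processes are independent, the probability of this event factors into the probability that clock $k_i$ fires exactly once on $I_i$ times the product over $k \neq k_i$ of the probability that clock $k$ fires zero times on $I_i$. By Definition~\ref{def:inhomPpp}, the number of events of clock $k$ on $I_i$ is ${\rm Pois}(\Lambda_{k,i})$-distributed with $\Lambda_{k,i} = \int_{q_{i-1}}^{q_i}\lambda_k(t)\,dt \in [r,R]$. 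Hence the probability of exactly one firing of clock $k_i$ is $\Lambda_{k_i,i}e^{-\Lambda_{k_i,i}}$ and the probability of no firing of clock $k$ is $e^{-\Lambda_{k,i}}$.

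Next I would bound each factor below using $\Lambda_{k,i}\in[r,R]$: the map $x\mapsto xe^{-x}$ is positive and continuous on $[r,R]$, hence bounded below by some $c_1(r,R) := \min_{x\in[r,R]} xe^{-x} > 0$, and $e^{-\Lambda_{k,i}} \ge e^{-R}$. Therefore the probability that exactly the jump $k_i$ (and nothing else) occurs on $I_i$ is at least $c_1(r,R)\cdot e^{-nR} =: q(r,R,n) > 0$, a bound that is uniform over the index $i$, over the choice of $k_i$, and over the actual positions $q_0<\cdots<q_m$. Since the contributions of disjoint subintervals $I_0,\ldots,I_{m-1}$ involve disjoint portions of each Poisson process and the processes are mutually independent, these $m$ events are jointly independent, so the probability that the full sequence $(k_1,\ldots,k_m)$ is realized on $I$ — meaning precisely that on $I_i$ only clock $k_i$ fires and exactly once — is at least $q(r,R,n)^m =: p(r,R,m,n) > 0$. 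This event is a subset of the event that the jump order sequence restricted to $I$ equals $(k_1,\ldots,k_m)$, so the claimed lower bound follows.

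The one point that needs a little care — and which I expect to be the main (minor) obstacle — is the bookkeeping connecting the ``combined'' jump order sequence $(k_i)_i$ from Section~\ref{sec:nonhompoisson} to the per-clock event just described: one must check that ``on each $I_i$ exactly clock $k_i$ fires once and no other clock fires'' indeed forces the jump order sequence on $I$ to be exactly $(k_1,\ldots,k_m)$ in that order, which is immediate because the subintervals are ordered $q_0<q_1<\cdots<q_m$ and each contributes precisely one jump time. One should also note that the almost-sure exclusion of coincident jump times and accumulation points (already arranged in Section~\ref{sec:nonhompoisson}) makes the jump order sequence well defined, so there is no ambiguity in the event whose probability we are bounding. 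No continuity or topology arguments are needed here since the state space is finite; the entire proof is the Poisson tail computation plus independence.
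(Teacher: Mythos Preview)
Your proposal is correct and follows essentially the same route as the paper: bound the probability that on each subinterval $I_i$ clock $k_i$ fires exactly once and all other clocks are silent, using $\Lambda_{k,i}\in[r,R]$ to get the per-interval lower bound $\min_{x\in[r,R]}xe^{-x}\cdot e^{-nR}$, and then invoke independence across the $m$ disjoint subintervals. The paper writes the single-firing bound as $p_1(r,R)=\min\{re^{-r},Re^{-R}\}$, which coincides with your $c_1(r,R)$ since $x\mapsto xe^{-x}$ is unimodal; your additional paragraph on the bookkeeping with the jump order sequence is a welcome clarification but not a departure in method.
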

\begin{proof}
For any of the $n+1$ Poisson processes governing TASEP, the probability that this Poisson process has exactly one event in the interval $I_i$ is greater or equal 
$p_1(r,R):=\min\{re^{-r},\,Re^{-R}\}$. Similarly, for any process the probability that it has no event on $I_i$ is greater or equal $p_2(R)=e^{-R}$. As the $n+1$ Poisson processes are independent, the probability that in the interval $I_i$ exactly one jump event occurs and that this event is caused by process $k_i$ is larger 
than~$p_1 (r,R)p_2(R)^n$. As the jump events on different subintervals $I_i\ne I_j$ are independent, the event that this happens for all $I_i$ is thus greater or equal to~$p(r,R,m,n):=(p_1(r,R) p_2(R)^n)^m$.
\end{proof}

Based on this lemma, we can state the following result for the set~$\Gamma(t,t_0)$ from~\eqref{eq:gammadef}.

\begin{lemma} 
Suppose that the non-autonomous TASEP satisfies Assumption \ref{asm:boundedrates} for an interval $I=[t_0,t)$ with $m=n(n+1)/2$. Then there exists $\delta=\delta(r,R,n)>0$ such that \[ \P(\Gamma(t_0,t)) \ge \delta. \]
\label{lemma:fintime}
\end{lemma}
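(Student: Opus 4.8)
The plan is to exhibit one deterministic ``synchronizing word'' of hop indices, of length exactly $m=n(n+1)/2$, that maps every configuration in $X=\{0,1\}^n$ to the empty configuration, and then to read the probability bound off Lemma~\ref{lemma:jumpsequence}. Concretely, I would take the jump order tuple $(k^{\star}_1,\dots,k^{\star}_m)$ obtained by concatenating, for $j=n,n-1,\dots,1$ in this order, the blocks $B_j:=(j,\,j+1,\,\dots,\,n)$. Since $B_j$ has $n-j+1$ entries, the total length is $\sum_{j=1}^{n}(n-j+1)=n(n+1)/2=m$, and all entries lie in $\{1,\dots,n\}$, so the entry clock $k=0$ is never invoked.

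The core of the argument is that applying these blocks through the map $f$ empties the chain irrespective of the starting state. I would prove, by downward induction on $j$, that after the blocks $B_n,B_{n-1},\dots,B_j$ have been applied to an arbitrary $x\in X$, the sites $j,j+1,\dots,n$ are all empty. The base case $j=n$ is immediate, since $B_n=(n)$ and $f(\cdot,n)$ empties site $n$. For the induction step, assume that sites $j+1,\dots,n$ are empty when $B_j$ starts, and process $B_j$ hop by hop: the first hop $f(\cdot,j)$ empties site $j$, since a particle there (if any) moves into the empty site $j+1$; the next hop empties site $j+1$ by moving any particle into the still-empty site $j+2$; and so on down the block, with the last hop $f(\cdot,n)$ emptying site $n$. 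Hence after $B_j$ the sites $j,\dots,n$ are empty, which closes the induction; for $j=1$ this gives $f(\,\cdots f(f(x,k^{\star}_1),k^{\star}_2)\cdots,k^{\star}_m)=(0,\dots,0)$ for every $x\in X$.

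To finish, I would invoke Lemma~\ref{lemma:jumpsequence} on the interval $I=[t_0,t)$, on which Assumption~\ref{asm:boundedrates} holds with $m=n(n+1)/2$, applied to the tuple $(k^{\star}_1,\dots,k^{\star}_m)$: with probability at least $p:=p(r,R,m,n)>0$ the jump order sequence realized on $I$ is exactly $(k^{\star}_1,\dots,k^{\star}_m)$, with no other jump times inside $I$. On that event the definition of $\phi$ through $\tilde\phi$ in~\eqref{eq:tildephi} yields $\phi(t,t_0,x,\omega)=f(\,\cdots f(x,k^{\star}_1)\cdots,k^{\star}_m)$, which by the previous step equals $(0,\dots,0)$ for every $x\in X$; hence all solutions starting at time $t_0$ coincide at time $t$, i.e.\ $\omega\in\Gamma(t,t_0)$ in the notation of~\eqref{eq:gammadef}. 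Therefore $\P(\Gamma(t,t_0))\ge p\big(r,R,n(n+1)/2,n\big)=:\delta(r,R,n)>0$.

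The length count and the application of Lemma~\ref{lemma:jumpsequence} are routine; the step I expect to be the main obstacle is the synchronization claim of the second paragraph, namely that this ``right-to-left sweep'' of blocks really is synchronizing and has length exactly $n(n+1)/2$. The delicate point in the induction is that while $B_j$ is processed at most one occupied site is carried rightward through it, and it never meets an occupied downstream site precisely because the inductive hypothesis guarantees that everything to the right of $B_j$ is empty at the moment $B_j$ begins.
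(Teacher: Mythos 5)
Your proposal is correct and follows essentially the same route as the paper: the concatenated blocks $B_n,B_{n-1},\dots,B_1$ you construct are exactly the jump order sequence $n;\;n-1,n;\;n-2,n-1,n;\;\dots;\;1,2,\dots,n$ used in the paper's proof, and the conclusion is read off from Lemma~\ref{lemma:jumpsequence} in the same way. The only difference is that you supply the inductive verification that this sequence empties the chain from every initial state, which the paper leaves as ``easily seen''; your induction is sound.
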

\begin{proof}
According to Lemma \ref{lemma:jumpsequence}, the probability that on the interval $I$ the jump order sequence 
\[  n, \;\; n-1, n, \;\; n-2, n-1, n, \;\; n-3, \ldots,n, \;\; \ldots,
 \;\;1,2, \ldots,n \]
occurs is greater or equal to $\delta(r,R,n)=p(r,R,n(n+1)/2,n)>0$.
Now, for all $\omega\in\Omega$ generating this sequence it is easily seen that $\phi(t,t_0,x,\omega) = (0,\ldots,0)$ for all $x\in X$. This shows the claim.
\end{proof}

\begin{remark} 
The probability bound $\delta>0$ that follows from the proof of Lemma \ref{lemma:fintime} is clearly not optimal. Moreover, the condition in Assumption \ref{asm:boundedrates} is only sufficient but not necessary. Yet, as Example \ref{weirdTASEP} and Remark \ref{rem:weirdTASEP}, below, show, certain bounds on the rates are needed for obtaining forward and pullback attraction, hence we cannot do without any conditions on the rates.
\end{remark}

Using Lemma \ref{lemma:fintime}, we can now state the following theorem. 

\begin{theorem} Consider the non-autonomous TASEP model.

(a) Assume that there exist $R>r>0$ such that for each $t\in\R$ there is $t_0<t$ such that Assumption \ref{asm:boundedrates} holds for  this pair $(R, r)$ on the interval $I=[t_0,t)$ with $m=n(n+1)/2$.
Then the sets $A(t,\omega)$ from \eqref{eq:Adef} define a global random pullback attractor, where $A(t,\omega)$ is a singleton for all $t\in\R$ and almost all $\omega\in\Omega$. 

(b) If, moreover, there exist $R>r>0$ and $\Delta t>0$ such that Assumption \ref{asm:boundedrates} holds for  this pair $(R, r)$ with $m=n(n+1)/2$ on all intervals of the form $I=[\tau,\tau+\Delta t)$, $\tau\in\R$,
then the $A(t,\omega)$ also provide a global random forward attractor and the convergence in Theorem \ref{thm:singleton}(2) has an exponential rate.
\label{thm:TASEPmain}
\end{theorem}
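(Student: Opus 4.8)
The plan is to assemble this theorem from the general NRDS results of Section~\ref{sec:attractors} together with the TASEP-specific estimate in Lemma~\ref{lemma:fintime}, using the fact---already recorded at the beginning of Section~\ref{sec:TASEPattractors}---that the solutions of the TASEP NRDS are stochastically independent on non-overlapping time intervals, so that Theorem~\ref{thm:rattr} and all of Theorem~\ref{thm:singleton} are applicable to this model.

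For part (a), I would first note that the probability bound $\delta=\delta(r,R,n)$ furnished by Lemma~\ref{lemma:fintime} depends only on $r$, $R$, $n$ and not on the location or the length of the interval on which the $(r,R,m)$-bounded-rates structure is present. Hence, under the hypothesis of~(a), for every $t\in\R$ the associated $t_0<t$ satisfies $\P(\Gamma(t,t_0))\ge\delta>\delta/2=:p$, so property~(1) of Theorem~\ref{thm:singleton} holds with this single constant $p>0$. Since the TASEP NRDS meets the stochastic-independence assumption of Theorem~\ref{thm:singleton}, property~(1) is equivalent to properties~(2)--(4); in particular property~(4) yields that $A(t,\omega)$ is a singleton for all $t\in\R$ and almost all $\omega\in\Omega$. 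That the family $A(t,\omega)$ forms a global random pullback attractor is already contained in Theorem~\ref{thm:rattr}, which applies because the state space $X=\{0,1\}^n$ is finite. This completes~(a).

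For part (b), I would additionally exploit the uniform version of the hypothesis: for each $t\in\R$, take $t_0:=t-\Delta t$. Then $I=[t_0,t)$ is an interval of the form $[\tau,\tau+\Delta t)$, so Assumption~\ref{asm:boundedrates} holds on it with $m=n(n+1)/2$ and, crucially, the difference $t-t_0=\Delta t$ is bounded uniformly in $t$. Therefore the ``moreover'' clause of Theorem~\ref{thm:singleton} applies with this uniform bound: the convergence $\P(\Gamma(t,t_0))\to 1$ as $t_0\to-\infty$ is exponential, and the singletons $A(t,\omega)$ additionally provide a global random forward attractor. (Equivalently, the exponential rate gives the uniformity condition~\eqref{eq:Tp}, so forward attraction also follows directly from Theorem~\ref{thm:rattr}.)

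Since every ingredient is already in place, there is no substantial obstacle; the only points requiring a little care are to verify explicitly that the constant $\delta$ in Lemma~\ref{lemma:fintime} is genuinely uniform, so that it can serve as the single $p$ demanded in Theorem~\ref{thm:singleton}(1), and---for~(b)---that the choice $t_0=t-\Delta t$ indeed produces an interval to which the uniform hypothesis applies, thereby also supplying the uniform bound on $t-t_0$ needed for the exponential rate.
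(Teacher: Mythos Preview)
Your proposal is correct and follows essentially the same route as the paper: invoke Theorem~\ref{thm:rattr} for the pullback attractor, use Lemma~\ref{lemma:fintime} to verify condition~(1) of Theorem~\ref{thm:singleton}, exploit the independence of the Poisson processes on non-overlapping intervals to pass from~(1) to~(4), and for part~(b) observe that the uniform $\Delta t$ supplies the uniform bound on $t-t_0$ needed for the final clause of Theorem~\ref{thm:singleton}. Your write-up merely makes explicit a few details (the uniformity of $\delta(r,R,n)$ and the choice $t_0=t-\Delta t$) that the paper leaves implicit.
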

\begin{proof} (a) Theorem \ref{thm:rattr} yields that $A(t,\omega)$ is a pullback attractor. The sets $A(t,\omega)$ are a.s.\ singletons because Lemma \ref{lemma:fintime} implies property (1) in Theorem \ref{thm:singleton} and since the Poisson processes in TASEP are independent on non-overlapping intervals, this implies Theorem \ref{thm:singleton}(4).

(b) Under the additional assumption on the uniformity of $\Delta t$ the claim follows immediately from Lemma~\ref{lemma:fintime} and from the last statement of Theorem~\ref{thm:singleton}.
\end{proof}

\begin{remark}\label{rem:nonsync}
Statement (b) in Theorem \ref{thm:TASEPmain} in particular implies that any two solutions of TASEP synchronize almost surely to a single trajectory after sufficiently large time. As in the time-invariant case, this almost sure identity does not exclude the existence of non-trivial jump time sequences for which $\phi(t,t_0,x_1,\omega)$ and $\phi(t,t_0,x_2,\omega)$ never coincide. We refer to \cite{GrKM21} for an example, which also shows that in TASEP forward attraction of~$A$ does not hold for every~$\omega\in\Omega$. 
\end{remark}

The following example shows that forward attraction may indeed fail to hold if the assumption in Theorem~\ref{thm:TASEPmain}~(b) is violated.

\begin{example}\label{weirdTASEP}
We define jump rates for the non-autonomous TASEP model with~$n=2$ sites
that satisfy the assumptions of Theorem~\ref{thm:TASEPmain}~(a), but do \emph{not} satisfy the additional uniformity assumption of Theorem~\ref{thm:TASEPmain}~(b), and  prove that the sets~$A(t, \omega)$ do not provide a global random forward attractor. Our example is based on the observation that the jump order sequence $1,2,1,0$ takes the state $x_1:=10$ to $x_1$,  and the state $x_2:=11$ to $x_2$. We define the rates $\lambda_i(t)=1$ for all $t<0$ and $i=0,1,2$. For $t\ge 0$ the rates are $0$, except for
\begin{eqnarray*}
\lambda_0(t)&=&4(j+1)\,, \quad \mbox{ if } t\in [j+3/4, j+1)\,,\\
\lambda_1(t)&=&4(j+1)\,, \quad \mbox{ if } t\in [j,j+1/4) \cup [j+1/2, j+3/4)\,,\\
\lambda_2(t)&=&4(j+1)\,, \quad \mbox{ if } t\in [j+1/4, j+1/2)
\end{eqnarray*}
for all non-negative integers $j$. Note that for all $t\ge 0$ exactly one of the rates $\lambda_i(t)$ is positive which simplifies the analysis. E.g.~in the interval $[0,1/4)$ only jumps starting at site $1$ may occur and they occur with probability $1-e^{-1}$ if the system is in state $x_1=10$ at time $t=0$. Observe that at time $t=1/4$ the system is then in state $01$ no matter how many additional times the jump from site~$1$ is attempted in the interval~$[0,1/4)$. For non-negative integers~$j$, let 
\[
\Omega_j:=\{\omega \in \Omega \mid \mbox{at least one jump attempt occurs in each [j+k/4, j+(k+1)/4) }, k=0,1,2,3 \}.
\]
Using independence one obtains $\P(\Omega_j)=(1-e^{-(j+1)})^4$. Moreover, for all $\omega \in \Omega_j$ we have $\phi(j+1,j,\{x_1\},\omega) = \{x_1\}$ and $\phi(j+1,j,\{x_2\},\omega) = \{x_2\}$. Set $\tilde \Omega := \bigcap_{j\ge 0} \Omega_j$. Then independence implies $\P(\tilde \Omega)=\Pi_{j\ge 0}(1-e^{-(j+1)})^4 > 0$. In addition, for all~$\omega \in \tilde \Omega$ and~$j \in \N$ we have $\{x_1, x_2\} \subset \phi(j,0,X,\omega)$. As our jump rates satisfy the assumptions of Theorem~\ref{thm:TASEPmain}~(a) we know that the global random pullback attractor $A(t,\omega)$ is a singleton for all $t \in \R$ and almost all $\omega \in \Omega$. Since $\phi(t,0,X,\omega)$ contains at least two elements for all $t\ge 0$ and $\omega \in \tilde \Omega$ we learn that $\lim_{t\to \infty} \dist\big(\phi(t, 0, X, \omega), A(t,\omega)\big) = 1$ with positive probability. Therefore the sets $A(t, \omega)$ do not provide a global random forward attractor.
\end{example}

\begin{remark}\label{rem:weirdTASEP}
(i)
The construction of Example~\ref{weirdTASEP} can easily be extended to lattices of arbitrary size $n$. Replace state~$x_1$ by the state where only the second site is empty,  and~$x_2$ by the state where no site is empty. These two states are preserved by the jump order sequence $1, n, n-1, \ldots, 2, 1, 0$. 

(ii) One may modify Example~\ref{weirdTASEP} so that it does not satisfy the assumptions of Theorem~\ref{thm:TASEPmain}~(a). Set $\lambda_i(t) = 1$ for $t \geq 0$ and $i=0,1,2$. For $t<0$ we use the definition in
Example~\ref{weirdTASEP} for negative integers $j$ and with rates $-4j$ instead of $4(j+1)$ in the corresponding intervals. Then one can show for $t \le 0$ that both states $x_1$ and $x_2$ from Example~\ref{weirdTASEP} are contained in the pullback attractor $A(t, \omega)$ with positive probability. Nevertheless, as $t\to \infty$ the sets $A(t, \omega)$ become singletons almost surely and provide a global random forward attractor.
\end{remark}
The following corollary specializes the main result in Theorem \ref{thm:TASEPmain} to the case of periodic rates.

\begin{corollary}\label{cor:periodic}
    Consider the non-autonomous TASEP model in which all rates $\lambda_0,\ldots,\lambda_n:\R\to\R_{\ge 0}$ are periodic with identical (not necessarily minimal) period $T_{per}>0$, i.e., $\lambda_i(t+T_{per}) = \lambda_i(t)$ holds for all $t\in\R$ and all $i=0,\ldots,n$. Assume that $\lambda_i|_{[0,T_{per}]}$, $i=0,\ldots,n$ are $L^1$ functions that are not equal to the zero-function. Then the statements~(a) and~(b) from Theorem \ref{thm:TASEPmain} hold with $\Delta t = \frac{n(n+1)}{2}T_{per}$. 
\end{corollary}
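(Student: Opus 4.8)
The goal is to verify that the periodicity hypotheses of Corollary~\ref{cor:periodic} imply Assumption~\ref{asm:boundedrates} in the uniform form required by Theorem~\ref{thm:TASEPmain}~(b), which then also gives (a) as a weaker consequence. The key quantitative object is the single-period mass $c_i := \int_0^{T_{per}}\lambda_i(t)\,dt$ for each $i=0,\ldots,n$. Since each $\lambda_i|_{[0,T_{per}]}$ is $L^1$ and not the zero function, we have $0 < c_i < \infty$ for every $i$, and by periodicity $\int_{\tau}^{\tau+T_{per}}\lambda_i(t)\,dt = c_i$ for \emph{every} $\tau\in\R$. Set $c_{\min} := \min_i c_i > 0$ and $c_{\max} := \max_i c_i < \infty$.

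\textbf{Step 1: the division of the interval.} Fix $\Delta t := \frac{n(n+1)}{2}T_{per}$ and let $m := n(n+1)/2$, so that $\Delta t = mT_{per}$. Given an arbitrary interval $I = [\tau,\tau+\Delta t)$, divide it into the $m$ subintervals $I_i := [\tau + iT_{per},\ \tau + (i+1)T_{per})$ for $i=0,\ldots,m-1$. Each $I_i$ has length exactly $T_{per}$, so by the periodicity identity above, $\int_{I_i}\lambda_k(t)\,dt = c_k$ for all $i$ and all $k$. Hence the bound $r \le \int_{I_i}\lambda_k(t)\,dt \le R$ required in Assumption~\ref{asm:boundedrates} holds uniformly with the explicit choice $r := c_{\min}$ and $R := c_{\max}$ (if $c_{\min}=c_{\max}$, replace them by $c_{\min}/2$ and $2c_{\max}$ to keep the strict inequality $R>r>0$ demanded in the assumption). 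This shows that Assumption~\ref{asm:boundedrates} holds with $m=n(n+1)/2$ on \emph{every} interval of the form $[\tau,\tau+\Delta t)$, $\tau\in\R$, which is precisely the hypothesis of Theorem~\ref{thm:TASEPmain}~(b).

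\textbf{Step 2: invoking the main theorem.} Since the hypothesis of part~(b) of Theorem~\ref{thm:TASEPmain} is verified with this pair $(R,r)$ and this $\Delta t$, the conclusion of part~(b) holds: the sets $A(t,\omega)$ provide a global random forward attractor and the convergence in Theorem~\ref{thm:singleton}(2) is exponential. For part~(a): its hypothesis requires only that for each $t\in\R$ there is \emph{some} $t_0<t$ with Assumption~\ref{asm:boundedrates} on $[t_0,t)$; taking $t_0 := t - \Delta t$ and the same division as in Step~1 (shifted so that the last subinterval ends at $t$) works, so (a) holds as well. Thus both (a) and (b) of Theorem~\ref{thm:TASEPmain} hold with $\Delta t = \frac{n(n+1)}{2}T_{per}$, which is the claim.

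\textbf{Main obstacle.} There is essentially no deep obstacle here; the corollary is a direct specialization. The only point requiring minor care is the strict-inequality convention $R > r > 0$ in Assumption~\ref{asm:boundedrates}: when all the $c_i$ coincide one must slightly enlarge the interval $[c_{\min},c_{\max}]$ to $[r,R]$ with $r<R$, which is harmless since the stated bounds are inequalities, not equalities. A secondary bookkeeping point is aligning the grid of subintervals with the endpoint $t$ in part~(a) versus the free left endpoint $\tau$ in part~(b); in both cases periodicity makes the integral over any length-$T_{per}$ window equal to $c_k$, so the placement of the grid is immaterial. Hence the proof is short and the estimate for the exponential rate comes for free from Theorem~\ref{thm:singleton} with $\gamma = -\ln(1-\delta(r,R,n))/\Delta t$, where $\delta$ is the bound from Lemma~\ref{lemma:fintime}.
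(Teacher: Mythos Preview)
Your proof is correct and follows essentially the same route as the paper's own argument: use that each $\lambda_i$ has a positive finite integral over one period, invoke periodicity to get the same value over every length-$T_{per}$ window, and then check that this yields Assumption~\ref{asm:boundedrates} on every interval of length $\tfrac{n(n+1)}{2}T_{per}$ so that Theorem~\ref{thm:TASEPmain}(a),(b) apply. You are simply more explicit than the paper about the subdivision into $m$ subintervals and about the $R>r$ edge case, but the underlying idea is identical.
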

\begin{proof}
    Since the $\lambda_i$ restricted to $[0,T_{per}]$ are $L^1$-functions that are not identically $0$, it follows that there are $R>r>0$ such that 
    \[ r \le \int_0^{T_{per}} \lambda_i(t) dt \le R. \]
    Now periodicity implies that 
    \[\int_t^{t+T_{per}} \lambda_i(t) dt = \int_0^{T_{per}} \lambda_i(t) dt\]
    holds for all $t\in\R$. This implies that the assumptions of Theorem \ref{thm:TASEPmain}(a) and (b) are satisfied and thus also the statements hold.
\end{proof}

\section{Numerical simulations of synchronization}\label{sec:NumSim}

In this section, we show two numerical simulations\footnote{Videos showing the entire dynamics of the simulations are available at 
\href{https://mms.uni-bayreuth.de/Panopto/Pages/Sessions/List.aspx?folderID=6e4e74be-5724-4b89-83f8-b26f0074f233}{https://mms.uni-bayreuth.de/Panopto/Pages/Sessions/List.aspx?folderID=6e4e74be-5724-4b89-83f8-b26f0074f233}.}. We assume that all the rates have units of~$1/\text{sec}$.

In the first simulation, we consider TASEP with $n=20$ sites, in which all rates except for $\lambda_{11}(t)$ are constantly equal to one, while $\lambda_{11}(t)$ changes periodically every 20 time units from 1 to $10^{-6}$ and vice versa. A green or red ``traffic light'' in the figures indicates the current value of $\lambda_{11}(t)$, where red corresponds to $10^{-6}$ and green to~$1$.

We have run the simulations with three different seeds of the random generator. The states at times $t=0,10,20,\ldots,100$ are depicted in the three columns of Figure \ref{fig:trafficlight}. It can be seen  that in all three runs synchronization happens between $t=30\text{sec}$ and $t=40\text{sec}$.

\begin{figure}
\begin{minipage}{1.4cm}$t=0$\\[6mm]
\end{minipage}
\includegraphics[trim=24 110 24 120, clip, width=4cm]{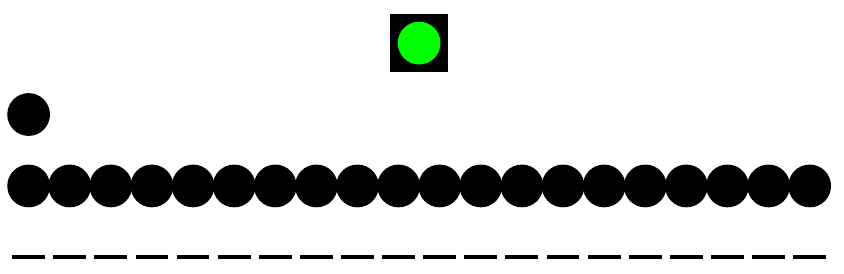}\qquad
\includegraphics[trim=24 110 24 120, clip, width=4cm]{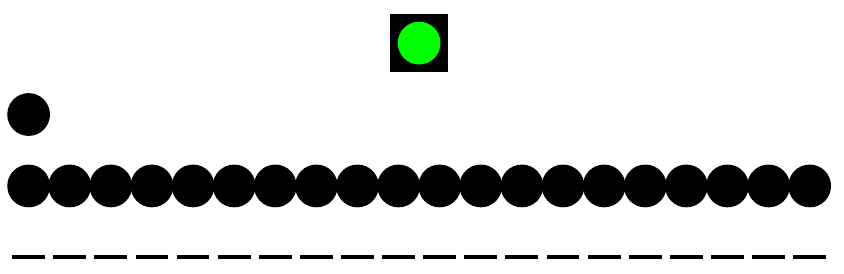}\qquad
\includegraphics[trim=24 110 24 120, clip, width=4cm]{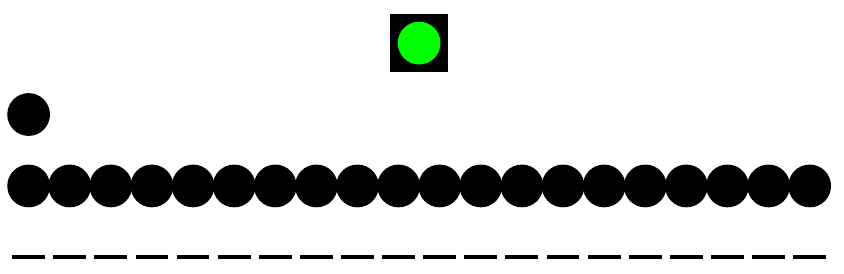}

\begin{minipage}{1.4cm}$t=10$\\[6mm]
\end{minipage}
\includegraphics[trim=24 110 24 120, clip, width=4cm]{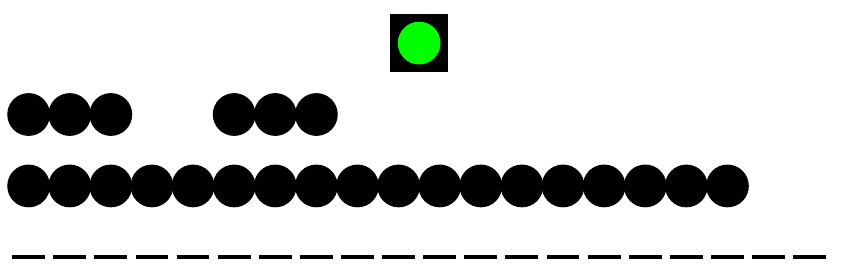}\qquad
\includegraphics[trim=24 110 24 120, clip, width=4cm]{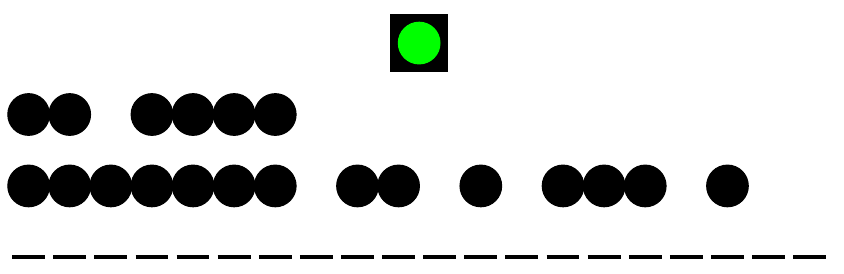}\qquad
\includegraphics[trim=24 110 24 120, clip, width=4cm]{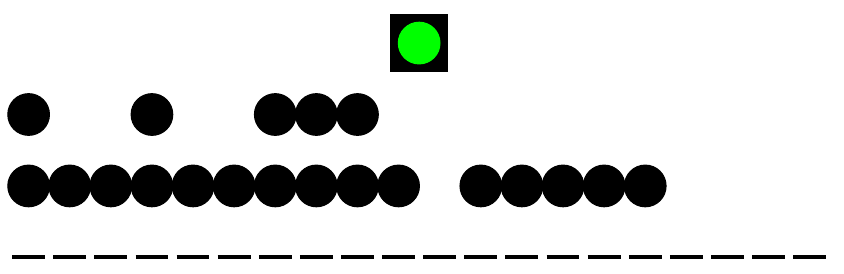}

\begin{minipage}{1.4cm}$t=20$\\[6mm]
\end{minipage}
\includegraphics[trim=24 110 24 120, clip, width=4cm]{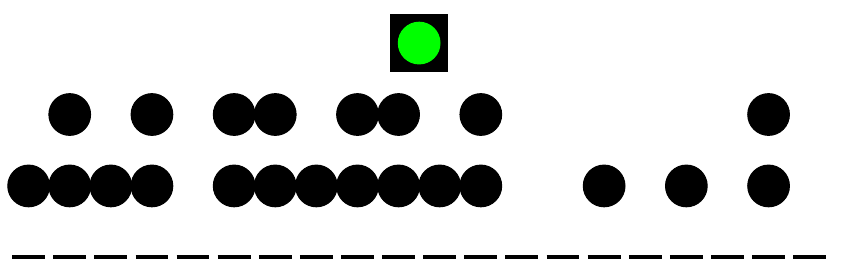}\qquad
\includegraphics[trim=24 110 24 120, clip, width=4cm]{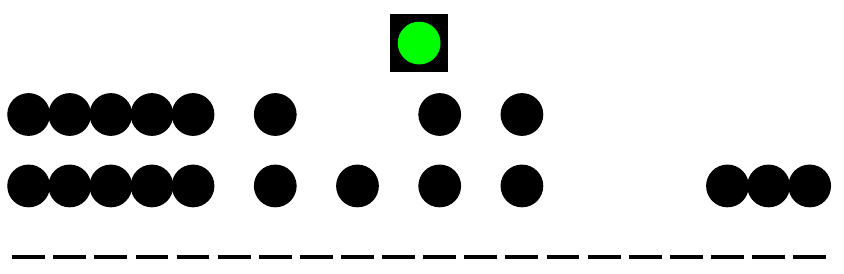}\qquad
\includegraphics[trim=24 110 24 120, clip, width=4cm]{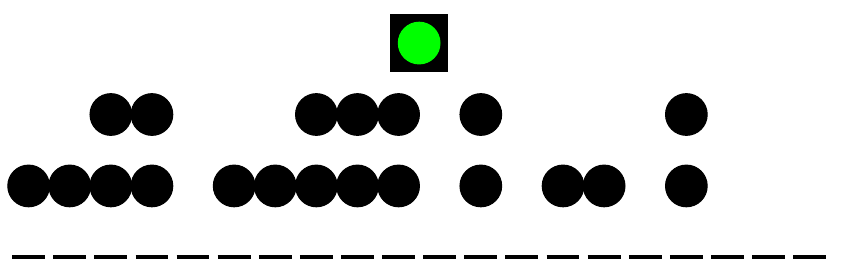}

\begin{minipage}{1.4cm}$t=30$\\[6mm]
\end{minipage}
\includegraphics[trim=24 110 24 120, clip, width=4cm]{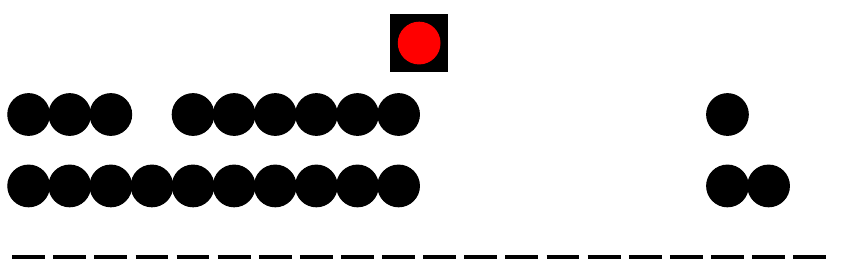}\qquad
\includegraphics[trim=24 110 24 120, clip, width=4cm]{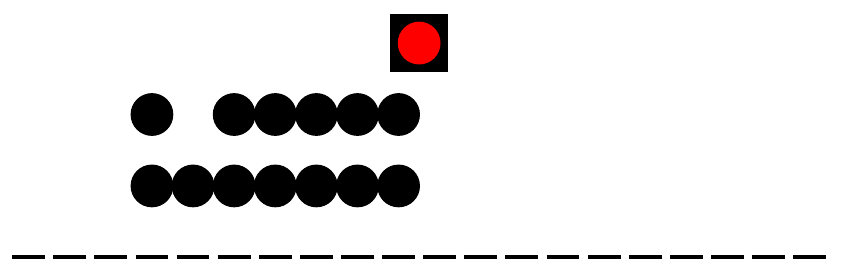}\qquad
\includegraphics[trim=24 110 24 120, clip, width=4cm]{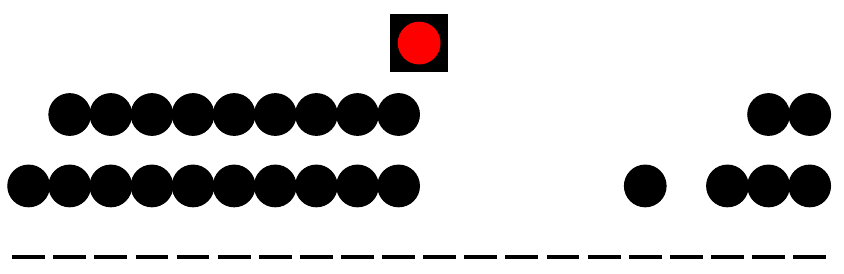}

\begin{minipage}{1.4cm}$t=40$\\[6mm]
\end{minipage}
\includegraphics[trim=24 110 24 120, clip, width=4cm]{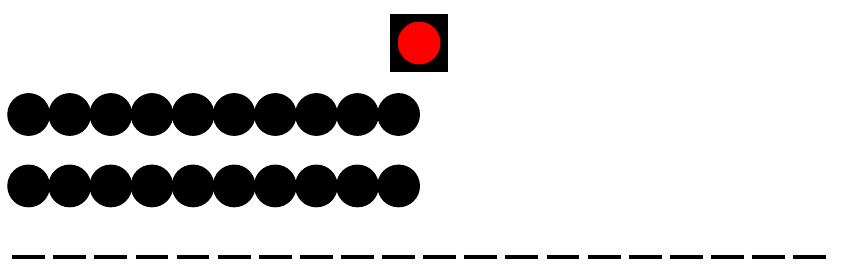}\qquad
\includegraphics[trim=24 110 24 120, clip, width=4cm]{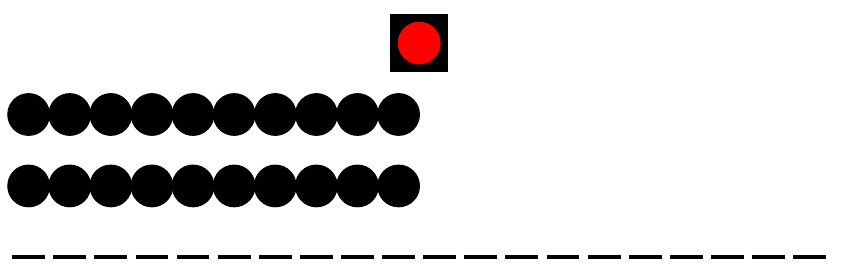}\qquad
\includegraphics[trim=24 110 24 120, clip, width=4cm]{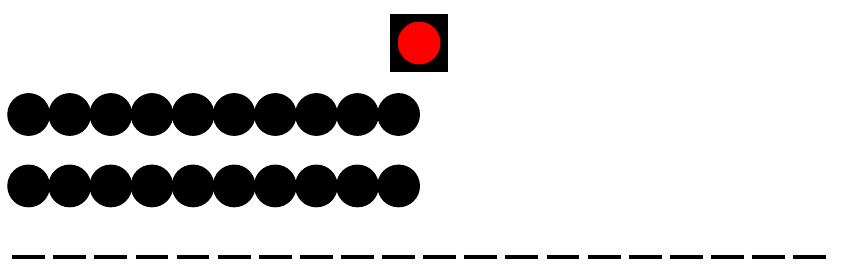}

\begin{minipage}{1.4cm}$t=50$\\[6mm]
\end{minipage}
\includegraphics[trim=24 110 24 120, clip, width=4cm]{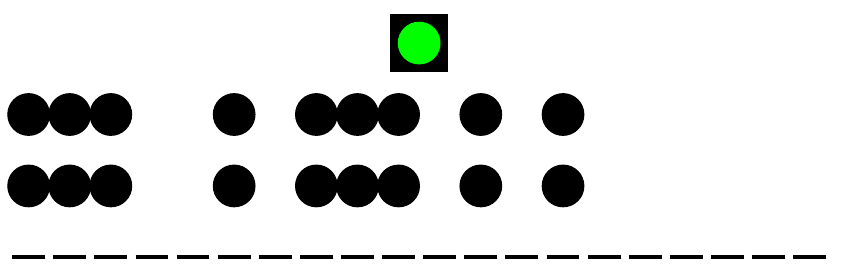}\qquad
\includegraphics[trim=24 110 24 120, clip, width=4cm]{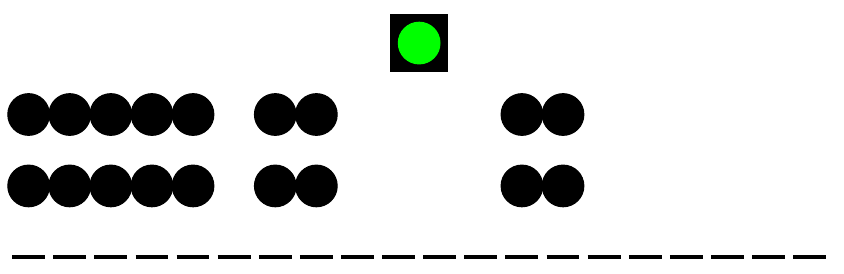}\qquad
\includegraphics[trim=24 110 24 120, clip, width=4cm]{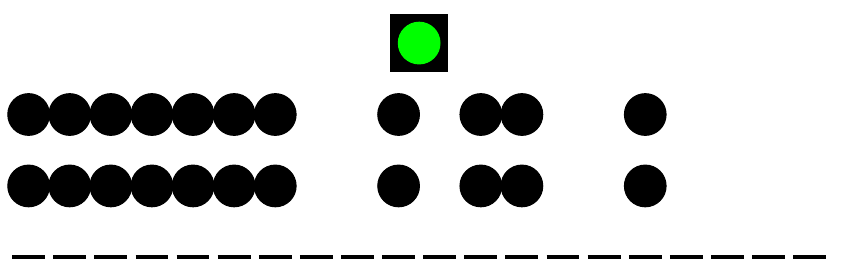}

\begin{minipage}{1.4cm}$t=60$\\[6mm]
\end{minipage}
\includegraphics[trim=24 110 24 120, clip, width=4cm]{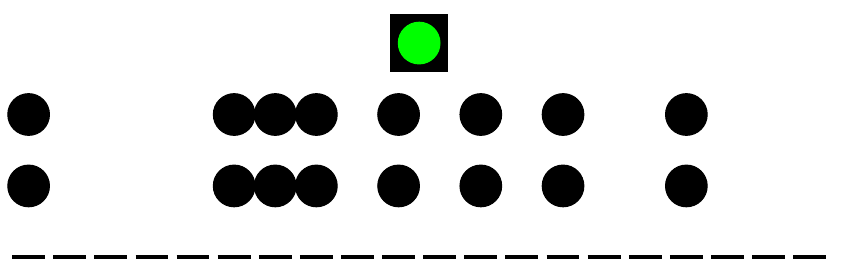}\qquad
\includegraphics[trim=24 110 24 120, clip, width=4cm]{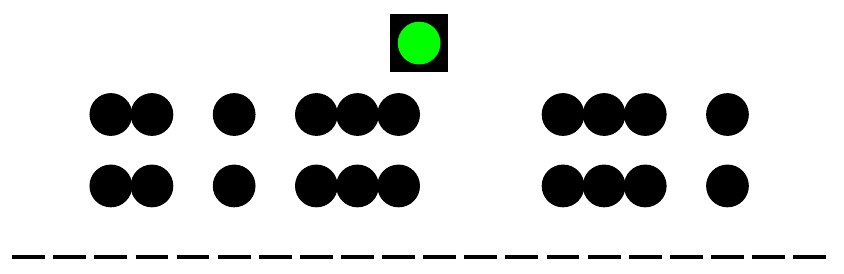}\qquad
\includegraphics[trim=24 110 24 120, clip, width=4cm]{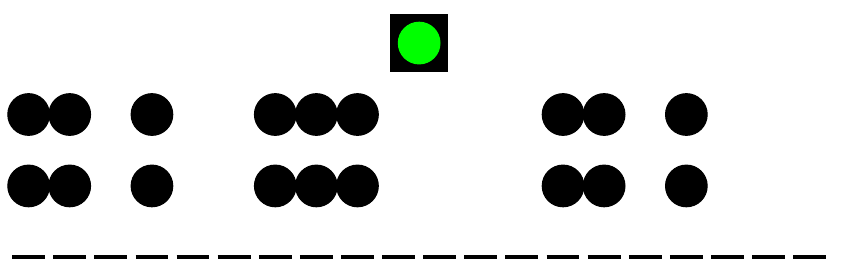}

\begin{minipage}{1.4cm}$t=70$\\[6mm]
\end{minipage}
\includegraphics[trim=24 110 24 120, clip, width=4cm]{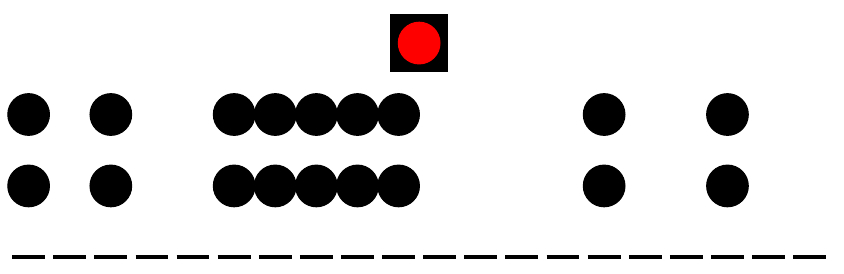}\qquad
\includegraphics[trim=24 110 24 120, clip, width=4cm]{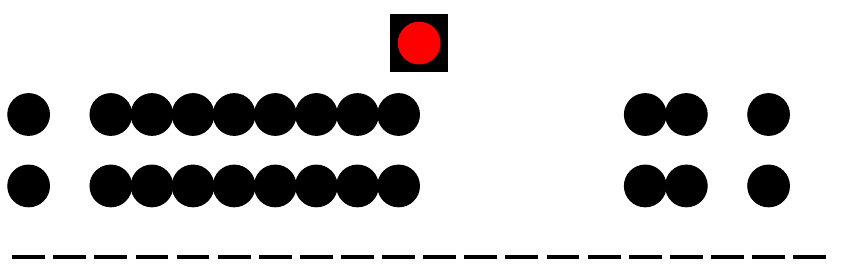}\qquad
\includegraphics[trim=24 110 24 120, clip, width=4cm]{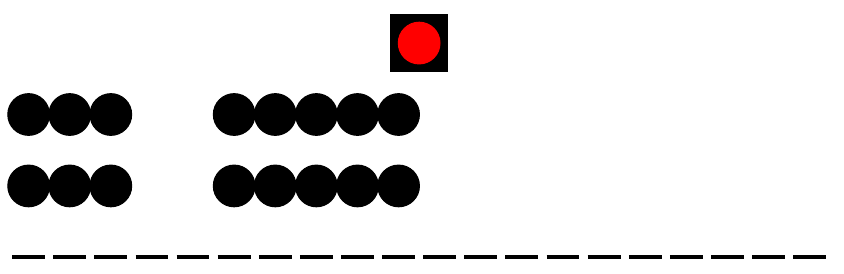}

\begin{minipage}{1.4cm}$t=80$\\[6mm]
\end{minipage}
\includegraphics[trim=24 110 24 120, clip, width=4cm]{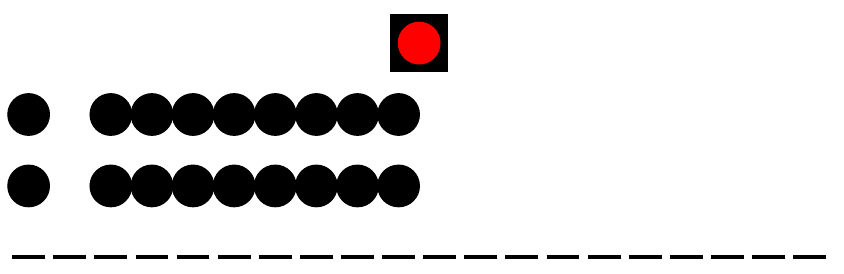}\qquad
\includegraphics[trim=24 110 24 120, clip, width=4cm]{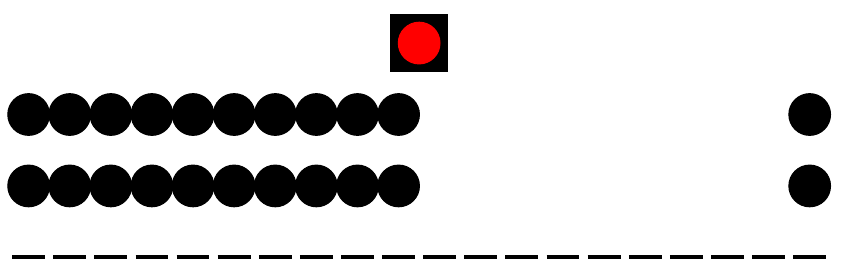}\qquad
\includegraphics[trim=24 110 24 120, clip, width=4cm]{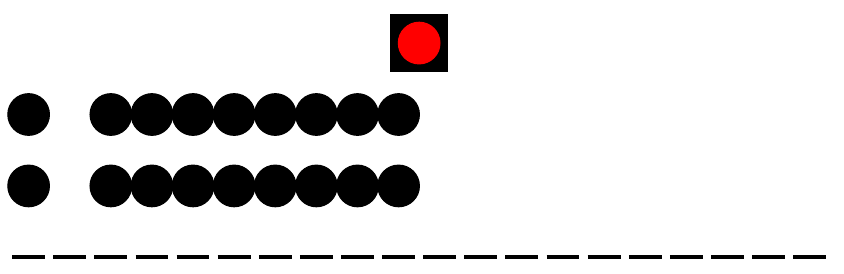}

\begin{minipage}{1.4cm}$t=90$\\[6mm]
\end{minipage}
\includegraphics[trim=24 110 24 120, clip, width=4cm]{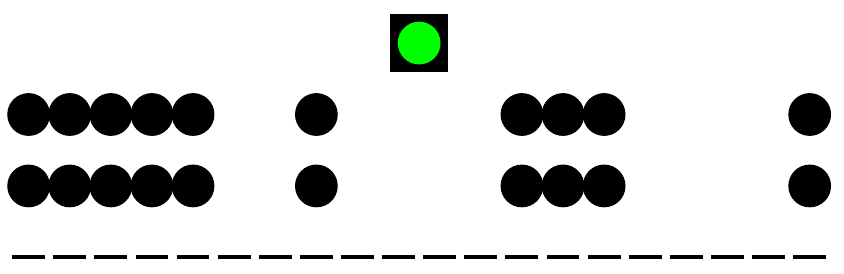}\qquad
\includegraphics[trim=24 110 24 120, clip, width=4cm]{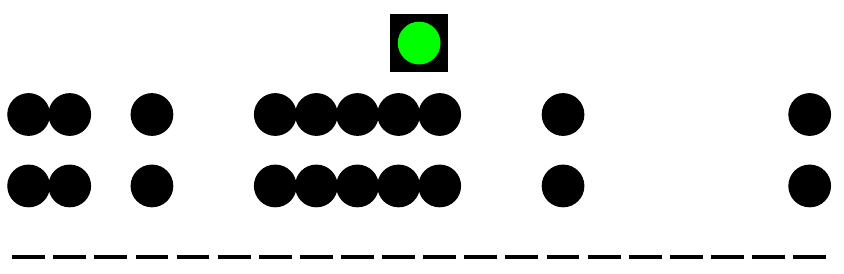}\qquad
\includegraphics[trim=24 110 24 120, clip, width=4cm]{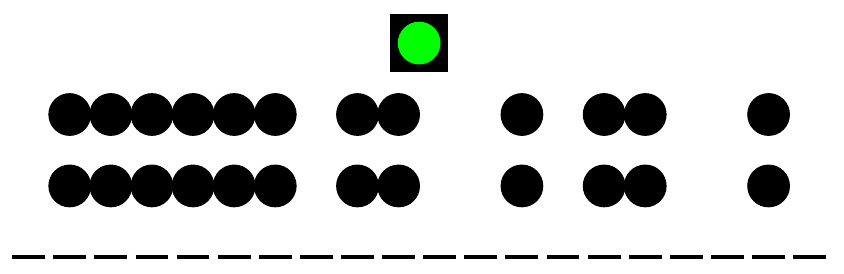}

\begin{minipage}{1.4cm}$t=100$\\[6mm]
\end{minipage}
\includegraphics[trim=24 110 24 120, clip, width=4cm]{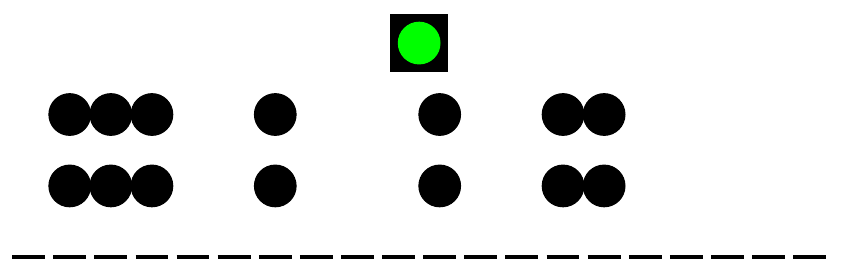}\qquad
\includegraphics[trim=24 110 24 120, clip, width=4cm]{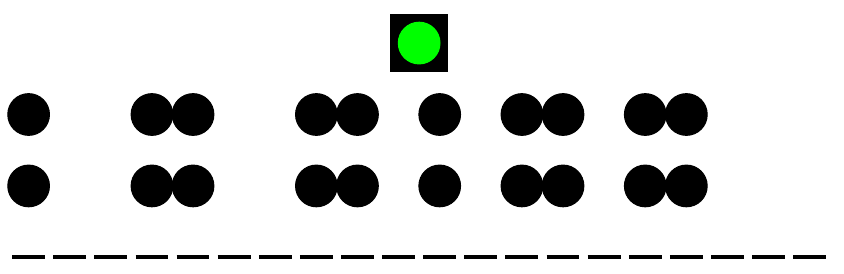}\qquad
\includegraphics[trim=24 110 24 120, clip, width=4cm]{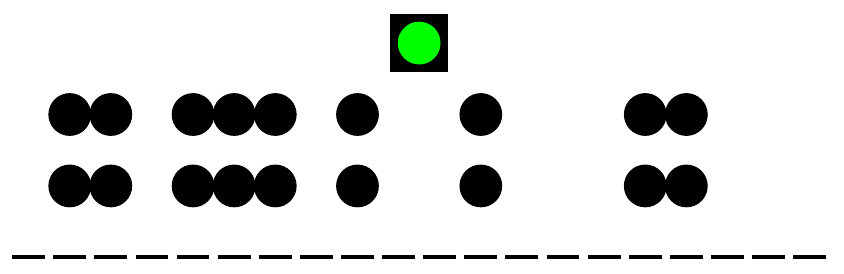}
\caption{Six trajectories of TASEP with $n=20$ sites,   varying rate $\lambda_{11}(t)$ for three different random jump time sequences (left to right) and two different initial conditions (depicted on top of each other for each $t$). The red light indicates that $\lambda_{11}(t)=10^{-6}$, while the green light indicates that $\lambda_{11}(t)=1$. Synchronization occurs for each run 
between~$t=30\text{sec}$ and~$t=40\text{sec}$.}
\label{fig:trafficlight}\end{figure}

The second example is a simulation of the nonautonomous TASEP with~$n=2$ sites in 
Example \ref{weirdTASEP} above, where, however, to increase the probability of non-synchronization in forward time, we have replaced the values $4(j+1)$ for the $\lambda_i(t)$s by $20(j+1)$. Indeed, for all simulations we performed the two trajectories did \emph{not} synchronize. Figure~\ref{fig:nonattraction} shows figures from one such run of the model.

\begin{figure}
\begin{center}
\begin{minipage}{1.4cm}$t=0.00$\\[6mm]
\end{minipage}
\includegraphics[trim=24 110 24 120, clip, width=4cm]{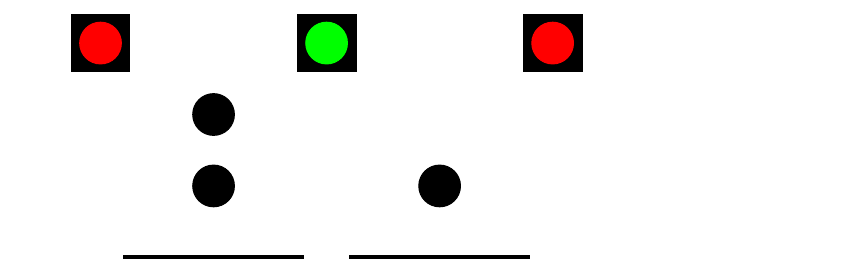}
\begin{minipage}{1.4cm}$t=2.00$\\[6mm]
\end{minipage}
\includegraphics[trim=24 110 24 120, clip, width=4cm]{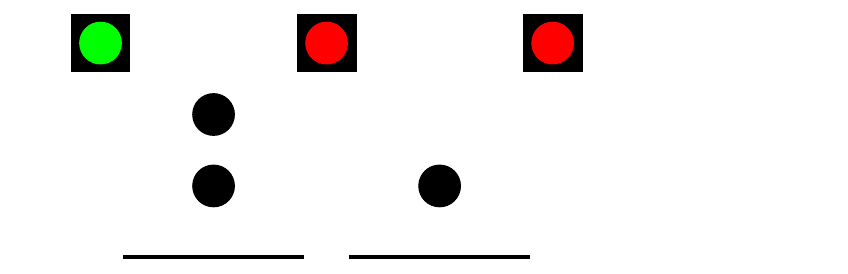}

\begin{minipage}{1.4cm}$t=0.25$\\[6mm]
\end{minipage}
\includegraphics[trim=24 110 24 120, clip, width=4cm]{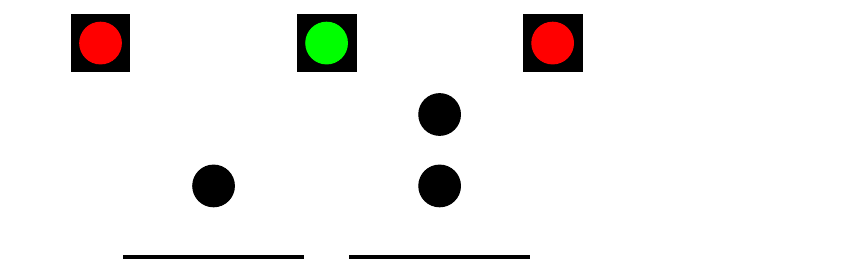}
\begin{minipage}{1.4cm}$t=2.25$\\[6mm]
\end{minipage}
\includegraphics[trim=24 110 24 120, clip, width=4cm]{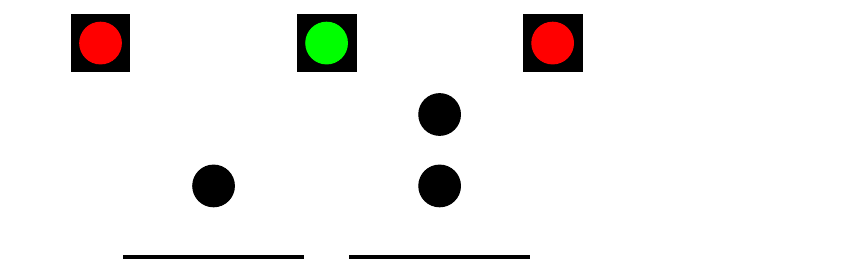}

\begin{minipage}{1.4cm}$t=0.50$\\[6mm]
\end{minipage}
\includegraphics[trim=24 110 24 120, clip, width=4cm]{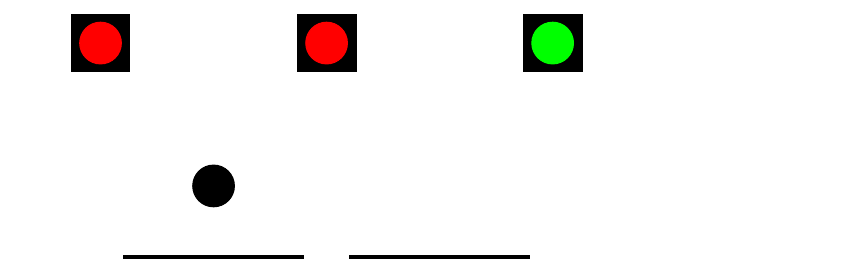}
\begin{minipage}{1.4cm}$t=2.50$\\[6mm]
\end{minipage}
\includegraphics[trim=24 110 24 120, clip, width=4cm]{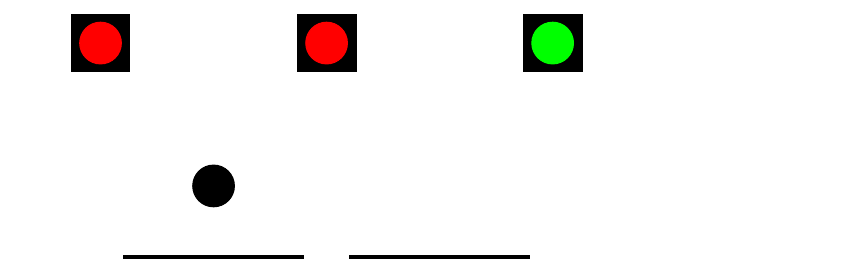}

\begin{minipage}{1.4cm}$t=0.75$\\[6mm]
\end{minipage}
\includegraphics[trim=24 110 24 120, clip, width=4cm]{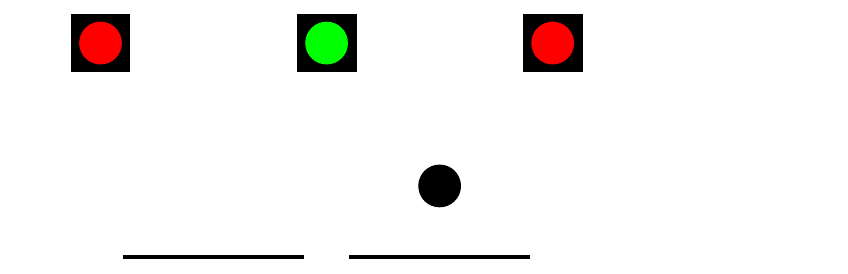}
\begin{minipage}{1.4cm}$t=2.75$\\[6mm]
\end{minipage}
\includegraphics[trim=24 110 24 120, clip, width=4cm]{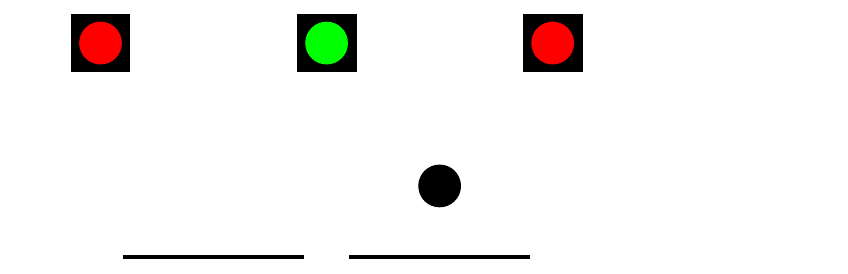}

\begin{minipage}{1.4cm}$t=1.00$\\[6mm]
\end{minipage}
\includegraphics[trim=24 110 24 120, clip, width=4cm]{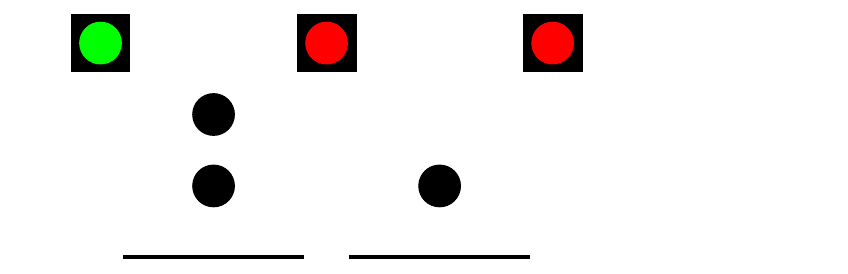}
\begin{minipage}{1.4cm}$t=3.00$\\[6mm]
\end{minipage}
\includegraphics[trim=24 110 24 120, clip, width=4cm]{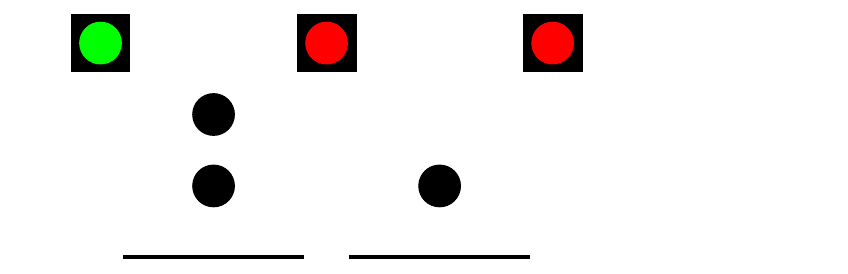}

\begin{minipage}{1.4cm}$t=1.25$\\[6mm]
\end{minipage}
\includegraphics[trim=24 110 24 120, clip, width=4cm]{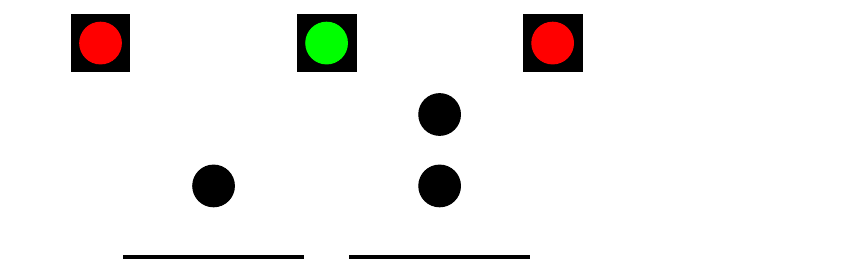}
\begin{minipage}{1.4cm}$t=3.25$\\[6mm]
\end{minipage}
\includegraphics[trim=24 110 24 120, clip, width=4cm]{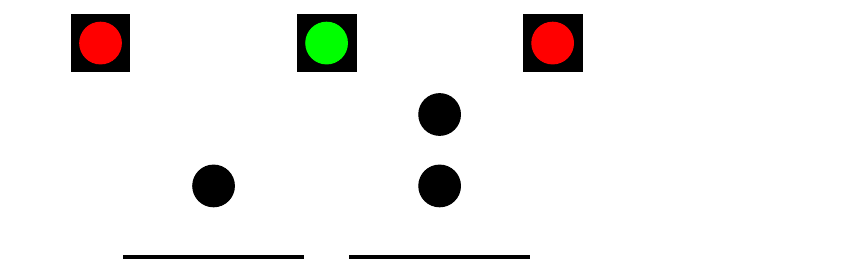}

\begin{minipage}{1.4cm}$t=1.50$\\[6mm]
\end{minipage}
\includegraphics[trim=24 110 24 120, clip, width=4cm]{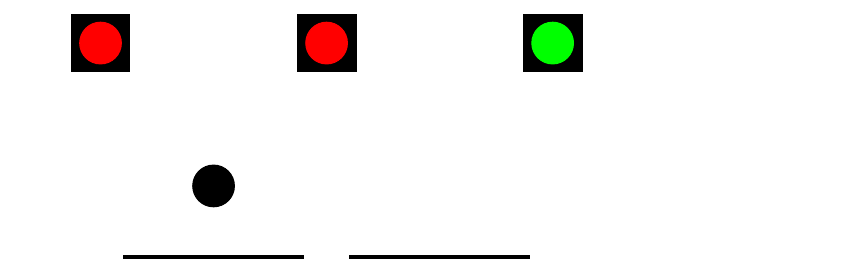}
\begin{minipage}{1.4cm}$t=3.50$\\[6mm]
\end{minipage}
\includegraphics[trim=24 110 24 120, clip, width=4cm]{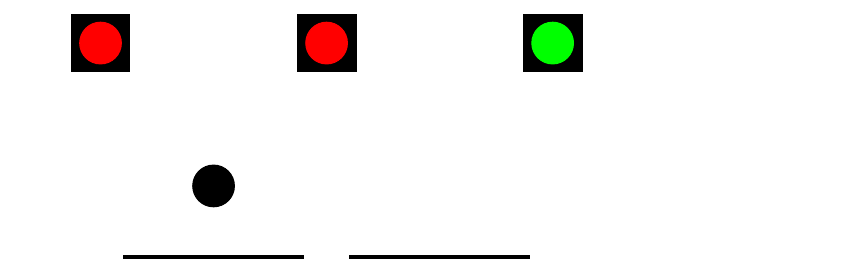}

\begin{minipage}{1.4cm}$t=1.75$\\[6mm]
\end{minipage}
\includegraphics[trim=24 110 24 120, clip, width=4cm]{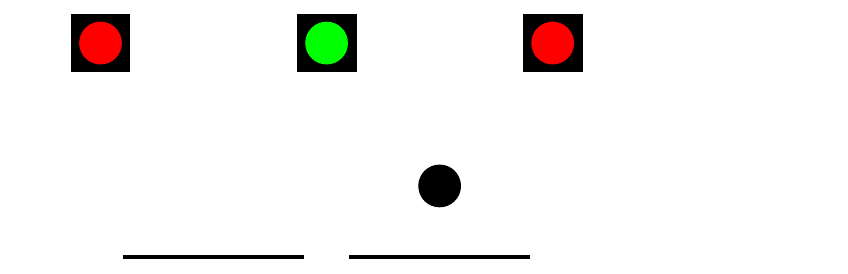}
\begin{minipage}{1.4cm}$t=3.75$\\[6mm]
\end{minipage}
\includegraphics[trim=24 110 24 120, clip, width=4cm]{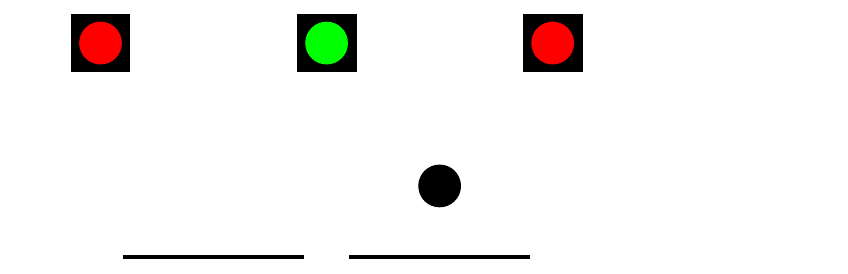}
\end{center}
\caption{Two trajectories of TASEP with $n=2$ sites and with rates varying as in Example~\ref{weirdTASEP} with modifications as described in Section \ref{sec:NumSim}. No synchronization occurs in forward time.}
\label{fig:nonattraction}\end{figure}

\newpage

\section{Conclusion} 

 TASEP 
is a fundamental model for the unidirectional movement of particles along a 1D  track.   It is typically assumed that the stochastic  hopping rates are time-invariant, but there are good reasons to study TASEP 
with time-varying hopping rates. For example, in biological applications the rates may depend on periodic processes like the cell cycle program or the
circadian rhythm.

 It is known that if the hopping rates are jointly~$T$-periodic then the probability distribution of the solutions converges to a unique $T$-periodic distribution, which is independent of the initial condition. Here, we considered the more detailed 
behavior of the individual paths of the model. Our main results are based on 
formulating  TASEP with time-varying hopping rates as an~NRDS, and 
 providing  conditions under which nonautonomous random pullback and forward
attractors exist for a general finite-state~NRDS. Using this approach, we provide  rather tight conditions guaranteeing that the attractors are singletons implying almost sure synchronization of the paths.

\bibliographystyle{IEEEtranS}

\bibliography{TASEP_attractor} 

\end{document}